\newtheorem{theorem}{Theorem}[section]
\newtheorem{proposition}[theorem]{Proposition}
\newtheorem{corollary}[theorem]{Corollary}
\theoremstyle{definition}
\newtheorem{definition}[theorem]{Definition}
\newtheorem{example}[theorem]{Example}
\newtheorem{Matlab Code}[theorem]{Matlab Code}
\theoremstyle{remark}
\newtheorem{remark}[theorem]{Remark}
\numberwithin{equation}{section}
\begin{document}

\setcounter{page}{1}

\title[Generalized Numerical Ranges; Max Algebras]{ results on the generalized numerical ranges in max algebra}

\author[N. Haj Aboutalebi, S. Fallat, A. Peperko, D. Taghizadeh,~\MakeLowercase{and} M. Zahraei]{Narges  Haj Aboutalebi$^1$, Shaun Fallat$^{2*}$, Aljo\v{s}a Peperko$^3$, Davod Taghizadeh$^4$,~\MakeLowercase {and}  Mohsen Zahraei$^{5}$}

\address{$^{1}$Department of Mathematics, Shahrood Branch, Islamic Azad University, Shahrood, Iran.}
\email{\textcolor[rgb]{0.00,0.00,0.84}{aboutalebi.n@yahoo.com}}

\address{$^{2}$Department of Mathematics and Statistics, University of Regina, Saskatchewan, Canada.}
\email{\textcolor[rgb]{0.00,0.00,0.84}{shaun.fallat@uregina.ca}}

\address{$^{3}$Faculty of Mechanical Engineering, University of Ljubljana, A\v{s}ker\v{c}eva 6, SI-1000 Ljubljana, Slovenia, and 
\newline{Institute of Mathematics, Physics and Mechanical, Jadranska 19, 1000 Ljubljana, Slovenia}}
\email{\textcolor[rgb]{0.00,0.00,0.84}{aljosa.peperko@fs.uni-lj.si}}

\address{$^4$Department of Mathematics, Ahvaz Branch, Islamic Azad University, Ahvaz, Iran.}
\email{\textcolor[rgb]{0.00,0.00,0.84}{t.davood1411@gmail.com}}

\address{$^{5*}$Department of Mathematics, Ahvaz Branch, Islamic Azad University, Ahvaz, Iran.}
\email{\textcolor[rgb]{0.00,0.00,0.84}{mzahraei326@gmail.com}}


\let\thefootnote\relax\footnote{Copyright 2018 by the Tusi Mathematical Research Group.}

\subjclass[2010]{Primary 39B82; Secondary 44B20, 46C05.}

\keywords{max algebra, tuples of matrices, joint numerical ranges, rank$-k$ numerical ranges.}

\date{Received: xxxxxx; Revised: yyyyyy; Accepted: zzzzzz.
\newline \indent $^{*}$Corresponding author}

\begin{abstract}
Let $n$ and $k$ be two positive integers with $k\leq n$ and $C$ an $n \times n$ matrix with nonnegative entries. In this paper, the rank-$k$ numerical range in the max algebra setting is introduced and studied. The related notions of the max joint $k$-numerical range and the max joint $C$-numerical range of an entry-wise nonnegative matrix and an $m$-tuple of nonnegative matrices are also introduced.  Some interesting algebraic properties of these concepts are investigated.  
\end{abstract} \maketitle

\section{\textbf{Introduction  and preliminaries}}
The algebraic system max algebra and its isomorphic versions are widely used in information technology, discrete event dynamic systems, combinatorial optimization, mathematical physics, and even in certain aspects of DNA analysis (see, for example \cite{AG,B,GMW,GM,MPE,MP} and the references cited therein). Moreover, in the last two decades, max algebra techniques were used to solve certain linear algebra problems (see e.g. \cite{ED,GS}).

Let $\mathbb{R}_+$ denote the nonnegative number. For our purpose the max algebra in question consists of $\mathbb{R}_+$ with sum $a\oplus b=\max\{a,b\}$ and the standard product $ab$, where $a,b\in\mathbb{R}_+$.  Let $\mathbb{R}_+^n$ be the collection of all $n$-column vectors with entries in $\mathbb{R}_+$, and $M_{m\times n}(\mathbb{R}_+)$ denote the set of the all $m\times n$ matrices such that its entries belong to $\mathbb{R}_+$. For the case $m=n$, $M_{n\times n}(\mathbb{R}_+)$ is denoted by $M_n(\mathbb{R}_+)$. Let $A=(a_{ij})\in M_{m\times n}(\mathbb{R}_+)$ and $B=(b_{ij})\in M_{n\times l}(\mathbb{R}_+)$. The product of $A$ and $B$ in max algebra is denoted by $A\otimes B$, where $(A\otimes B)_{ij}=\max_{k=1,\dots ,n}a_{ik}b_{kj}$. Moreover, if $A,B\in M_{m\times  n}(\mathbb{R}_+)$, where $A=(a_{ij})$ and $B=(b_{ij})$, then the sum of $A$ and $B$ in this max algebra framework is denoted by $A\oplus B$, where $(A\oplus B)_{ij}=\max\{a_{ij},b_{ij}\}$ for $i=1,\dots ,m$  and $j=1,\dots , n$.  For 
$A\in M_{m\times  n}(\mathbb{R})$ and $x\in \mathbb{R}^n$ we denote $\|A\|=\max_{i,j} |a_{ij}|$ and $\|x\|= \max _i |x_i|$. The max trace of $A=(a_{ij})\in M_{n\times n}(\mathbb{R}_+)$ equals 
$tr _{\otimes} (A)=\max a_{ii}$.

Let $I_k$ denote the $k\times k$ identity matrix, where $k$ is a positive integer. Also, a matrix $X\in M_{n\times k}(\mathbb{R}_+)$ is called an {\em isometry} in max algebra if $X^t\otimes X=I_k$, and the set of all isometry matrices in max algebra is denoted by $\mathcal{X}_{n\times k}$, where $n,k$ are positive integers with $k\leq n$. For the case $k=n$, $\mathcal{X}_{n\times n}$ is called {\em unitary} in this max algebra and is denoted by $\mathcal{U}_n$. It is known that the set $\mathcal{U}_n$ of unitary matrices equals the group of permutation matrices (see e.g. \cite{Erratum}).

Let $\mathbb{A}=(A_1,\dots ,A_m)$, where $A_i\in M_n(\mathbb{R}_+)$ for all $i=1,\dots ,m$. The max joint numerical range of $\mathbb{A}$ is defined and denoted by (see \cite{TZPA} and the follow-up correction \cite{Erratum})
\begin{equation*}
W_{\max}(\mathbb{A})=\{(x^t\otimes A_1\otimes x,\dots ,x^t\otimes A_m\otimes x):x\in\mathbb{R}_+^n,x^t\otimes x=1\}.
\end{equation*}
The following result on max numerical range $W_{max}(A)$ for one matrix $A\in M_{n}(\mathbb{R})$ was proven in \cite[Theorem 3.7]{TS} and a short proof was also given in \cite[Theorem 2]{TZPA}, \cite{Erratum}.
\begin{theorem}\label{WmaxA}
Let $A=(a_{ij})\in M_{n}(\mathbb{R}_{+})$ be a nonnegative matrix. Then
\[
W_{max}(A)=[a,  b]\subseteq \mathbb{R}_{+},
\] 
where $a=\displaystyle\min_{1\leq i\leq n}a_{ii}$  and  $b=\displaystyle\max_{1\leq i, j\leq n}a_{ij}$.
\end{theorem}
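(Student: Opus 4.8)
The plan is to first unwind the definition, then establish the two inclusions by separately bounding the max-quadratic form from above and below, and finally use a connectedness argument to fill in all intermediate values.

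First I would compute $x^t \otimes A \otimes x$ explicitly. Unwinding the max-algebra products gives $x^t \otimes A \otimes x = \max_{1\le i,j\le n} x_i a_{ij} x_j$, while the normalization $x^t\otimes x = \max_i x_i^2 = 1$ is equivalent, for nonnegative $x$, to $\max_i x_i = 1$. Thus $W_{\max}(A)$ is precisely the image of the continuous map $f(x) = \max_{1\le i,j\le n} x_i a_{ij} x_j$ on the set $S = \{x \in \mathbb{R}_+^n : \max_i x_i = 1\}$.

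Next I would prove $W_{\max}(A) \subseteq [a,b]$. For the upper bound, every coordinate of $x \in S$ satisfies $0 \le x_i \le 1$, so each term obeys $x_i a_{ij} x_j \le a_{ij} \le b$, whence $f(x) \le b$. For the lower bound, choose an index $\ell$ with $x_\ell = 1$ (such $\ell$ exists since $\max_i x_i = 1$); then $f(x) \ge x_\ell a_{\ell\ell} x_\ell = a_{\ell\ell} \ge a$. I would then check that both endpoints are attained: taking $x$ equal to the $i_1$-th standard basis vector for an index $i_1$ realizing $a = a_{i_1 i_1}$ gives $f(x) = a$, while taking $x$ supported on $\{i_0,j_0\}$ with $x_{i_0} = x_{j_0} = 1$, where $a_{i_0 j_0} = b$, forces $f(x) \ge a_{i_0 j_0} = b$ and hence $f(x) = b$.

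Finally, to show that every value in $[a,b]$ is attained, I would argue by connectedness. The set $S$ is path-connected: any $x \in S$ can be joined to the all-ones vector $\mathbf{1}$ by $\gamma(t) = (1-t)x + t\mathbf{1}$, which stays in $S$ because each $\gamma(t)_i \le 1$ while the coordinate where $x$ equals $1$ keeps $\max_i \gamma(t)_i = 1$. Since $f$ is continuous and $S$ is connected, the image $f(S)$ is a connected subset of $\mathbb{R}$, hence an interval; as it is contained in $[a,b]$ and contains both endpoints, it must equal $[a,b]$. The main obstacle is this last step—ruling out that the range is a proper subset of $[a,b]$—but it reduces cleanly to the intermediate value theorem once continuity of $f$ and connectedness of $S$ are in hand. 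One could alternatively avoid topology by exhibiting, for each $c \in [a,b]$, an explicit vector $x \in S$ with $f(x) = c$ obtained by interpolating between the two extremal vectors constructed above.
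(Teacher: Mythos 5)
Your proof is correct. Note that this paper does not actually prove Theorem \ref{WmaxA} itself: it is imported from \cite[Theorem 3.7]{TS}, with a short proof in \cite[Theorem 2]{TZPA} and \cite{Erratum}, so there is no in-paper argument to match line by line. Your two bounding steps (each term $x_i a_{ij} x_j \le b$ since $0\le x_i\le 1$, and $f(x)\ge a_{\ell\ell}\ge a$ at a coordinate with $x_\ell=1$) and the endpoint attainments coincide with the cited short proof. Where you genuinely differ is in how the intermediate values are filled in: the cited argument runs an intermediate-value argument along an explicit one-parameter family of vectors joining $e_{i_1}$ (realizing $a=a_{i_1 i_1}$) to a vector supported on a position realizing $b$, whereas you prove path-connectedness of the whole max-unit sphere $S=\{x\in\mathbb{R}_+^n:\max_i x_i=1\}$ via the segment $\gamma(t)=(1-t)x+t\mathbf{1}$ (correctly observing that the coordinate equal to $1$ is preserved, so $\gamma(t)\in S$) and then invoke connectedness of $f(S)$. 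Your route is slightly more general — it shows at once that the image of \emph{any} continuous map on $S$ is an interval, which is the same mechanism the paper later uses for compactness/connectedness questions about $W_{\max}^k$ — at the cost of a small topological detour. One caution about the alternative you sketch in your last sentence: a naive explicit interpolation such as $x=e_{i_1}\oplus t\,(e_{i_0}\oplus e_{j_0})$ with $t=\sqrt{\lambda/b}$ does not directly give $f(x)=\lambda$, because cross terms like $t\,a_{i_1 i_0}$ can exceed $\lambda$; one must instead argue via the intermediate value theorem along the family $t\mapsto f(x(t))$ (which is what your connectedness argument accomplishes), or design a more careful two-stage construction. Since that sentence is only a remark and not part of your actual proof, it introduces no gap.
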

 Let $A, C\in M_n(\mathbb{R}_+)$ and $c=[c_1,\dots ,c_n]^t\in\mathbb{R}_+^n$. The {\em max $c$-numerical range of $A$} is defined and denoted by 
 \begin{equation}
 W_{\max}^c(A)=\{\oplus_{i=1}^nc_i\otimes x_i^t\otimes A\otimes x_i:X=[x_1,\dots ,x_n]\in\mathcal{U}_n\},
 \label{c1}
 \end{equation}
and  the max $C$-numerical range of $A$ is defined and denoted by 
\begin{equation}
W_{\max}^C(A)=\{tr_{\otimes}(C\otimes X^t\otimes A\otimes X):X\in\mathcal{U}_n\}.
\label{bigC1}
\end{equation}
It is clear that $W_{max}^{c}(A)=\{tr_{\otimes}(C\otimes X^{t}\otimes A \otimes X): X\in \mathcal{U}_{n}  \},$ where 
$C=diag(c_{1}, \ldots, c_{n}),   c=[c_{1}, c_{2}, \ldots, c_{n}]^{t}\in \mathbb{R}_{+}^{n}$. So  $W_{\max}^C(A)$  is a generalization of $W_{max}^{c}(A)$ (see \cite{TZPA, Erratum}).

The article is organized in the following manner. In Section 2  we define the rank-$k$ numerical ranges in the max algebra setting, prove some of their basic properties, and explicitly calculate the rank-$k$ numerical range for certain Toeplitz matrices.  In Section 3 we prove some results for max numerical ranges from \cite{TZPA, Erratum}. In Sections 4 and 5 we introduce and study  joint $k$-numerical range and joint $C$-numerical range   in max algebra, respectively. 
\section{\textbf{The rank-$k$ numerical ranges  in max algebra}}
Given an $n\times n$ complex matrix $A,$  that is $A\in M_{n}(\mathbb{C})$, and a positive integer $1\leq k \leq n,$ the conventional rank-$k$ numerical range of $A$ is defined as:
\[
\Lambda_{k}(A)=\{\lambda\in\mathbb{C}: PAP=\lambda P~\textrm{for some rank-$k$ orthogonal projection $P$ on $\mathbb{C}^{n}$}\}.
\]	
From \cite[Proposition 1.1]{CGHK}, we have 
\begin{equation}\label{RankKnumericalRange }
\Lambda_{k}(A)=\{\lambda \in \mathbb{C}: X^{*}AX=\lambda I_{k}, ~\mathrm{for~ some}~ X\in \widetilde{\mathcal{X}}_{n\times k}  \},
\end{equation}
 where $\widetilde{\mathcal{X}}_{n\times k}$ is the set of complex $n\times k$ matrices satisfying 
$X^{*}X= I_{k}$. The sets $\Lambda_{k}(A),$ where $k\in \{1, \ldots, n\},$ are generally referred to as the higher rank numerical ranges of $A$.

It is natural to use a formula analogous to (\ref{RankKnumericalRange }) as a corresponding definition for the  higher rank numerical range of nonnegative matrices in this max algebra setting. Consequently, we make the following definition.
\begin{definition}\label{maxrankknumericalrange}
Let $A \in M_{n}(\mathbb{R}_{+})$ be a nonnegative matrix  and $1\leq k \leq n$ be a positive integer.  The max rank-$k$
numerical range of $A $ is defined and denoted by
\begin{equation*}
\Lambda_{k}^{max}(A)=\{\lambda \in \mathbb{R}_{+}: X^{t}\otimes A\otimes X=\lambda I_{k}~ \mathrm{for~ some}~ X \in \mathcal{X}_{n\times k} \}.
\end{equation*}
\end{definition}
\begin{remark}
Let $A \in M_{n}(\mathbb{R}_{+})$ be a nonnegative matrix  and  let $1\leq k \leq n$ be a positive integer.  Then $\lambda \in \Lambda_{k}^{max}(A)$ if and only if there exists $X=[x_1,\dots ,x_k] \in \mathcal{X}_{n\times k}$  such that
\[
\lambda=x_j^{t}\otimes A \otimes x_j~\textrm{and}~x_r^{t}\otimes A \otimes x_s=0 ~\textrm{for}~ 1\leq r, s \leq k, r\neq s, 1\leq  j \leq k.
\]
\end{remark}
As such, for $k=1,  \Lambda_{k}^{max}(A)$ reduces to the max-numerical range of $A,$  namely,
\begin{equation}\label{eq3}
 \Lambda_{1}^{max}(A)=W_{max}(A)=[a, b],
\end{equation}
where $a =\displaystyle\min_{1\leq i \leq n}a_{ii}$ and $b=\displaystyle\max_{1\leq i, j \leq n}a_{ij}$.

It is readily verified that 
\begin{equation}\label{eq4}
W_{max}(A)= \Lambda_{1}^{max}(A)\supseteq  \Lambda_{2}^{max}(A)\supseteq \cdots \supseteq  \Lambda_{n}^{max}(A).
\end{equation}
The max-numerical range $W_{max}(A)= \Lambda_{1}^{max}(A)$ is a nonempty, compact and convex subset of $\mathbb{R}_{+},$  and $\sigma_{max}(A)\subseteq \sigma_{trop}(A)\subseteq W_{max}(A),$  \cite[Proposition 4]{TZPA}, \cite{Erratum}, where $\sigma_{max}(A)$  and  $\sigma_{trop}(A)$  denote  the set of geometric and algebraic eigenvalues in max algebra, respectively.
First we point out a property of max-higher rank numerical ranges. 
\begin{proposition}
Let $B_1\in M_{n_1}(\mathbb{R}_+),B_2\in M_{n_2}(\mathbb{R}_+)$ and let $A=\left[ \begin{array}{cc}
B_1 & 0\\
0 & B_2
\end{array}
\right] \in M_{n}(\mathbb{R}_+)$, where $n=n_1+n_2$. Moreover, let $1\leq k_1\leq n_1$ and let $1\leq k_2\leq n_2$. Then the following assertion holds:
 \[
 \Lambda_{k_1}^{\max}(B_1)\cap\Lambda_{k_2}^{\max}(B_2)\subseteq\Lambda_k^{\max}(A), 
 \]
 where $k=k_1+k_2$.
\end{proposition}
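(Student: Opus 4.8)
The plan is to produce, for an arbitrary element of the intersection, an explicit isometry $X\in\mathcal{X}_{n\times k}$ that simultaneously diagonalizes $A$ to $\lambda I_k$. Suppose $\lambda\in\Lambda_{k_1}^{\max}(B_1)\cap\Lambda_{k_2}^{\max}(B_2)$. By Definition \ref{maxrankknumericalrange} there exist isometries $X_1\in\mathcal{X}_{n_1\times k_1}$ and $X_2\in\mathcal{X}_{n_2\times k_2}$ with $X_1^t\otimes B_1\otimes X_1=\lambda I_{k_1}$ and $X_2^t\otimes B_2\otimes X_2=\lambda I_{k_2}$, where by the isometry property $X_1^t\otimes X_1=I_{k_1}$ and $X_2^t\otimes X_2=I_{k_2}$. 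I would then take the block-diagonal matrix
\[
X=\begin{bmatrix} X_1 & 0\\ 0 & X_2\end{bmatrix}\in M_{n\times k}(\mathbb{R}_+),
\]
whose size is $(n_1+n_2)\times(k_1+k_2)=n\times k$, as the candidate witnessing $\lambda\in\Lambda_k^{\max}(A)$.

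The one ingredient deserving care is that the max-algebra product of conformally partitioned block matrices is computed block-by-block by the usual formula, with ordinary matrix products replaced by $\otimes$ and ordinary sums replaced by $\oplus$. This holds because $(M\otimes N)_{ij}=\max_\ell m_{i\ell}n_{\ell j}$ and the maximum over the full index range of $\ell$ splits as a maximum over the index blocks; distributivity of $\oplus=\max$ over this splitting yields the block formula, and any block product in which one factor is a zero block is itself a zero block.

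With this rule in hand, the remaining steps are short direct computations. First, $X^t=\begin{bmatrix} X_1^t & 0\\ 0 & X_2^t\end{bmatrix}$, so $X^t\otimes X$ has diagonal blocks $X_1^t\otimes X_1=I_{k_1}$ and $X_2^t\otimes X_2=I_{k_2}$ and vanishing off-diagonal blocks, giving $X^t\otimes X=I_k$; hence $X\in\mathcal{X}_{n\times k}$. Second, since $A$ is block diagonal, $A\otimes X=\begin{bmatrix} B_1\otimes X_1 & 0\\ 0 & B_2\otimes X_2\end{bmatrix}$, and left multiplication by $X^t$ produces diagonal blocks $X_1^t\otimes B_1\otimes X_1=\lambda I_{k_1}$ and $X_2^t\otimes B_2\otimes X_2=\lambda I_{k_2}$ with zero off-diagonal blocks. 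Therefore $X^t\otimes A\otimes X=\begin{bmatrix}\lambda I_{k_1} & 0\\ 0 & \lambda I_{k_2}\end{bmatrix}=\lambda I_k$, so $\lambda\in\Lambda_k^{\max}(A)$.

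I do not anticipate a genuine obstacle: the argument is a clean direct-sum-of-witnesses construction, and the only point requiring attention is confirming that the off-diagonal cross terms in the block products vanish in the max-algebra sense. It is worth flagging, however, that this construction only gives the inclusion $\subseteq$ rather than equality, since a general isometry in $\mathcal{X}_{n\times k}$ diagonalizing $A$ need not respect the block decomposition, so $\Lambda_k^{\max}(A)$ can a priori contain values produced by matrices $X$ that mix the two blocks.
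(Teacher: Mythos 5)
Your proposal is correct and matches the paper's own proof essentially step for step: both take the isometry witnesses $X_1, X_2$ for $\lambda$ and form the block-diagonal $X=\left[\begin{smallmatrix} X_1 & 0\\ 0 & X_2\end{smallmatrix}\right]$, then verify $X^t\otimes X=I_k$ and $X^t\otimes A\otimes X=\lambda I_k$ by block computation. Your explicit justification of the block-multiplication rule in max algebra is a detail the paper leaves implicit, but it is the same argument.
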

\begin{proof}
Let $\lambda\in\Lambda_{k_1}^{\max}(B_1)\cap\Lambda_{k_2}^{\max}(B_2)$ be given. Then there are $X_1\in M_{n_1\times k_1}(\mathbb{R}_+),X_2\in M_{n_2\times k_2}(\mathbb{R}_+)$ such that $X_1^t\otimes X_1=I_{k_1}$ and $X_2^t\otimes X_2=I_{k_2}$ and also we have $X_1^t\otimes B_1\otimes X_1=\lambda I_{k_1}$ and $X_2^t\otimes B_2\otimes X_2=\lambda I_{k_2}$. By setting $X=\left[ \begin{array}{cc}
X_1 & 0_{n_1\times k_2}\\
0_{n_2\times k_1} & X_2
\end{array}
\right] \in M_{n\times k}(\mathbb{R}_+)$, we have $X^t\otimes X=I_k$ and  hence
\begin{eqnarray*}
X^t\otimes A\otimes X&=&\left[ \begin{array}{cc}
X_1 & 0\\
0 & X_2
\end{array}
\right]^t\otimes\left[ \begin{array}{cc}
B_1 & 0\\
0 & B_2
\end{array}
\right]\otimes\left[ \begin{array}{cc}
X_1 & 0\\
0 & X_2
\end{array}
\right]\\
&=&\left[ \begin{array}{cc}
X_1^t\otimes B_1\otimes X_1 & 0\\
0 & X_2^t\otimes B_2\otimes X_2
\end{array}
\right]=\lambda I_k.
\end{eqnarray*}
Therefore, $\lambda\in\Lambda_k^{\max}(A),$ which completes the proof.

\end{proof}
The following example shows that the converse inclusion in the above proposition need not hold in general.
\begin{example}\label{ex_higher1}
Let $B_1=\left( \begin{array}{cc}
5 & 0\\
0 & 8
\end{array}\right), B_2=\left( \begin{array}{cc}
10 & 0\\
0 & 12
\end{array}\right)$ and $A=\left( \begin{array}{cc}
B_1 & 0\\
0 & B_2
\end{array}\right)$. Then by Equation (\ref{eq3}) and Definition \ref{maxrankknumericalrange} one has 
\[
 \Lambda_1^{max}(B_1)=[5,8],~ \Lambda_1^{max}(B_2)=[10,12]~  \textit{and} ~ \Lambda_2^{max}(A)=[8,10]
 \]
 and so \[
\Lambda_2^{\max}(A)\neq\Lambda_1^{\max}(B_1)\cap\Lambda_1^{\max}(B_2).
\]

  To prove $\Lambda_2^{max}(A)=[8,10],$ let $\lambda \in \Lambda_2^{max}(A)$ be given. So, there exists 
 $X=\left[\begin{array}{cccc}
x_{1} & x_{2} & x_{3} & x_{4}\\
y_{1} & y_{2} & y_{3} & y_{4}
\end{array}
\right]^{t} \in \mathcal{X}_{4 \times 2}$ such that $X^t\otimes A\otimes X=\lambda I_{2}$. Then we have
\begin{equation*}
\max\{5 x_{1}^{2}, 8x_{2}^{2}, 10x_{3}^{2}, 12x_{4}^{2} \}=\lambda~ \textrm{and}~ \max\{5 y_{1}^{2}, 8y_{2}^{2}, 10y_{3}^{2}, 12y_{4}^{2} \}=\lambda.
\end{equation*}
Now, either $x_{4}$ or $y_{4}$ is zero. Without loss of generality, we may assume $x_{4}=0$.  Then by the first  equality above, we have $\lambda\leq 10$. Similarly, at least one of  $x_{1}$ or $y_{1}$ is zero. Without loss of generality, we may assume $x_{1}=0$.  So if $x_{1}=0,$ then by the first  equality above, we have $\max\{8x_{2}^{2}, 10x_{3}^{2}, 12x_{4}^{2}\}=\lambda$. The minimum value of $\lambda$ occurs when $x_{2}=1,$ so $\lambda\geq 8,$ and hence $\lambda \in [8, 10]$.
Conversely, if $\lambda \in [8, 10],$ by setting
\begin{equation*}
X=\left[\begin{array}{cccc}
1 & 0 & \sqrt{\frac{\lambda}{10}} & 0\\
0 & 1 & 0 & \sqrt{\frac{\lambda}{12}}
\end{array}
\right]^{t} ,
\end{equation*}
then $X^{t}\otimes X=I_{2}$ and $X^{t}\otimes A \otimes X=\lambda I_{k},$ which this implies that $\lambda\in\Lambda_2^{max}(A)$, and the proof is complete.
 

\end{example}

In the following proposition, we state some basic properties of the max rank-$k$ numerical range of nonnegative matrices.

\begin{proposition}\label{pro5}
Let $A\in M_n(\mathbb{R}_+)$. Then the following assertions hold.

\begin{itemize}

\item[(i)] If $A=\alpha I_n,$ where $\alpha\in\mathbb{R}_+,$ then $\Lambda_k^{\max}(A)=\{\alpha\}$ for all $1\leq k\leq n$;

\item[(ii)]   $\Lambda_k^{\max}(\alpha A)=\alpha\Lambda_k^{\max}(A)$  for all $\alpha\in\mathbb{R}_+$ and  $1\leq k\leq n$;

\item[(iii)] $\Lambda_k^{\max}(A^t)=\Lambda_k^{\max}(A)$ for all  $1\leq k\leq n$;

\item[(iv)]   $\Lambda_{n}^{max}(A)\neq\emptyset$ if and only if $A=\lambda I_{n}$.

\item[(v)] If $B\in M_m(\mathbb{R}_+)$ is a principal submatrix of $A$ and $1\leq k\leq m$, then $\Lambda_k^{\max}(B)\subseteq\Lambda_k^{\max}(A)$;

\item[(vi)] If $C=\left[\begin{array}{cc}
A & 0\\
0 & B
\end{array}
\right]$, where $A\in M_{n_1}(\mathbb{R}_+),\ B\in M_{n_2}(\mathbb{R}_+),$ then 
\[
\Lambda_k^{\max}(A)\cup\Lambda_k^{\max}(B)\subseteq\Lambda_k^{\max}(C),  ~ 1\leq k\leq\min\{n_1,n_2\}.
\]
\end{itemize}

\end{proposition}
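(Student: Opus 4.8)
The plan is to prove each of the six items (i)--(vi) in turn, since they are independent assertions about the max rank-$k$ numerical range. Most of these will reduce to exhibiting an explicit isometry $X\in\mathcal{X}_{n\times k}$ and checking the defining equation $X^t\otimes A\otimes X=\lambda I_k$, or to transporting such an isometry from a submatrix up to the full matrix.

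For item (i), if $A=\alpha I_n$ then for any $X\in\mathcal{X}_{n\times k}$ one computes $X^t\otimes(\alpha I_n)\otimes X=\alpha(X^t\otimes X)=\alpha I_k$, so every admissible $X$ yields the single value $\alpha$; since $\mathcal{X}_{n\times k}$ is nonempty (e.g. the first $k$ standard basis columns), the set is exactly $\{\alpha\}$. For item (ii), the scalar $\alpha\geq 0$ factors through the max product: $X^t\otimes(\alpha A)\otimes X=\alpha(X^t\otimes A\otimes X)$, so $X$ realizes $\lambda$ for $A$ exactly when it realizes $\alpha\lambda$ for $\alpha A$, giving the set equality (the case $\alpha=0$ should be checked separately but follows since both sides collapse to $\{0\}$). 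For item (iii), I would use that $X\in\mathcal{X}_{n\times k}$ iff $X\in\mathcal{X}_{n\times k}$ under transposition in the sense that $(X^t\otimes A\otimes X)^t=X^t\otimes A^t\otimes X$, so $\lambda I_k=(\lambda I_k)^t$ shows $\lambda$ is realized for $A$ iff it is realized for $A^t$ by the same $X$.

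Items (v) and (vi) are the structural embedding statements and are proved by padding isometries with zero blocks, exactly as in the preceding Proposition. For (v), if $B$ is the principal submatrix of $A$ on an index set $S$ of size $m$ and $Y\in\mathcal{X}_{m\times k}$ realizes $\lambda$ for $B$, I would build $X\in M_{n\times k}(\mathbb{R}_+)$ by placing the rows of $Y$ in the positions indexed by $S$ and zeros elsewhere; then $X^t\otimes X=Y^t\otimes Y=I_k$ and, because the off-$S$ entries of $A$ never get selected by the max when multiplied against the zero rows of $X$, one gets $X^t\otimes A\otimes X=Y^t\otimes B\otimes Y=\lambda I_k$. Item (vi) follows from (v) since both $A$ and $B$ are principal submatrices of the block-diagonal $C$, so $\Lambda_k^{\max}(A)\subseteq\Lambda_k^{\max}(C)$ and $\Lambda_k^{\max}(B)\subseteq\Lambda_k^{\max}(C)$, and the union is therefore contained in $\Lambda_k^{\max}(C)$.

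Item (iv) is the one I expect to be the main obstacle, since it is an "if and only if" with genuine content rather than a padding argument. The "if" direction is immediate from (i). For "only if", suppose $\Lambda_n^{\max}(A)\neq\emptyset$; then there is $X\in\mathcal{X}_{n\times n}=\mathcal{U}_n$ with $X^t\otimes A\otimes X=\lambda I_n$. The key fact I would invoke is that $\mathcal{U}_n$ equals the group of permutation matrices, as stated in the preliminaries. Thus $X=P$ for a permutation $P$, and conjugation $P^t\otimes A\otimes P$ is just a simultaneous permutation of the rows and columns of $A$ (a relabeling of indices). The equation $P^t\otimes A\otimes P=\lambda I_n$ then forces the permuted matrix to have every diagonal entry equal to $\lambda$ and every off-diagonal entry equal to $0$; since permutation conjugation preserves the diagonal-entry multiset and sends off-diagonal entries to off-diagonal positions, this means $A$ itself must already be $\lambda I_n$. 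I would write this out by noting $(P^t\otimes A\otimes P)_{ij}=a_{\pi(i)\pi(j)}$ for the underlying permutation $\pi$, so the condition reads $a_{\pi(i)\pi(i)}=\lambda$ for all $i$ and $a_{\pi(i)\pi(j)}=0$ for all $i\neq j$; as $\pi$ ranges over a bijection this is exactly $a_{ii}=\lambda$ for all $i$ and $a_{ij}=0$ for $i\neq j$, i.e. $A=\lambda I_n$. The care required here is to justify that max-algebra conjugation by a permutation acts as an honest index relabeling, which is where I would spend the bulk of the argument.
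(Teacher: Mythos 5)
Your proposal is correct and follows essentially the same route as the paper: (i)--(iii) directly from the definition, (iv) by using that $\mathcal{U}_n$ is the group of permutation matrices so that $(X^t\otimes A\otimes X)_{ij}=a_{\sigma(i)\sigma(j)}$, which forces every diagonal entry of $A$ to equal $\lambda$ and every off-diagonal entry to vanish, (v) by padding the isometry for $B$ with zero rows outside the index set (your version is in fact slightly more careful than the paper's, whose construction as literally written places the entries of $x_i$ in the first $m$ coordinates rather than at the positions $i_1,\dots,i_m$), and (vi) as a corollary of (v). The only blemish is the parenthetical in your (ii): for $\alpha=0$ the two sides need not both be $\{0\}$, since if $\Lambda_k^{\max}(A)=\emptyset$ then $\alpha\Lambda_k^{\max}(A)=\emptyset$ while $\Lambda_k^{\max}(0\cdot A)=\{0\}$; but this degenerate case is a defect of the statement itself, which the paper sidesteps by declaring (i)--(iii) easy, so it does not affect your argument for $\alpha>0$.
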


\begin{proof}
The assertions (i), (ii) and (iii) easily from Definition \ref{maxrankknumericalrange}. Proceeding with the forward implication in the proof of (iv), let $A=(a_{ij})\in M_n(\mathbb{R}_+)$ and $\lambda\in\Lambda_n^{\max}(A)$ be given. Then there exists   $X=[x_1,\dots ,x_n]$ such that $X^t\otimes A\otimes X=\lambda I_n$. Since $X\in\mathcal{U}_n$, there is a $\sigma\in S_n$ such that $x_i=e_{\sigma (i)}$ for all $i=1,\dots ,n$. Hence
\[
x_i^t\otimes A\otimes x_j=e_{\sigma (i)}\otimes A\otimes e_{\sigma (j)}=a_{\sigma (i),\sigma (j)}=\begin{cases}
\lambda & i=j,\\
0 & i\neq j.
\end{cases}
\]
Therefore $A=\lambda I_n$. The converse implication is obvious.

To prove (v), let $B\in M_m(\mathbb{R}_+)$ be formed by considering rows and columns $i_1,\dots ,i_m$ from $A$. Let $\lambda\in\Lambda_k^{\max}(B)$ be given. Then there exists $X=[x_{1}, x_{2}, \ldots, x_{k}]\in \mathcal{X}_{m\times k}$ such that $X^t\otimes B\otimes X=\lambda I_{k}$. Now for any $1\leq i\leq k$, define $y\in\mathbb{R}_+^n$ such that $(y_i)_j=(x_i)_j$ for $j=1,\dots ,m$ and with other  entries $(y_i)_j=0$. Then $Y=[y_1,\dots ,y_k]\in\mathcal{X}_{n\times k}$ and $Y^t\otimes A\otimes Y=\lambda I_k$   and so  $\lambda\in\Lambda_k^{\max}(A)$.

The assertion (vi)  follows from (v),     which completes the proof.
\end{proof}

The following example demonstrates that the reverse inclusion of Proposition \ref{pro5} (v) need not hold in general (also the matrices in Example \ref{ex_higher1} demonstrate this fact).
\begin{example}
Let $A=\begin{bmatrix}
20 & 9\\
7 &1
\end{bmatrix}, B=\begin{bmatrix}
2 &6\\
5 &3
\end{bmatrix}$ and $C=\begin{bmatrix}
A & 0\\
0 & B
\end{bmatrix}$. Then one has 
\[
\Lambda_1^{\max}(A)=[1,20],\ \Lambda_1^{\max}(B)=[2,6],  \Lambda_1^{\max}(C)=[1,20]
\]
and 
\[
\Lambda_2^{\max}(A)=\Lambda_2^{\max}(B)=\emptyset,  \Lambda_2^{\max}(C)=[2,6],
\]
hence
\[
\Lambda_1^{\max}(C)=\Lambda_1^{\max}(A)\cup\Lambda_1^{\max}(B),
\]
but
\[
\Lambda_2^{\max}(C)\neq\Lambda_2^{\max}(A)\cup\Lambda_2^{\max}(B).
\]
Also, 
\[
\Lambda_3^{\max}(C)=\Lambda_4^{\max}(C)=\emptyset.
\]

To prove that $\Lambda_2^{\max}(C)=[2,6],$ suppose that $\lambda \in [2, 6]$ is given.

If $2\leq \lambda\leq \frac{81}{20},$ by setting 
\begin{equation*}
X=\left[\begin{array}{cccc}
0 & 0 & 1 & \frac{\lambda}{6}\\
\frac{\lambda}{9} & 1 & 0 & 0
\end{array}
\right]^{t} ,
\end{equation*}
then $X^{t}\otimes X=I_{2}$ and $X^{t}\otimes C \otimes X=\lambda I_{2}$.

If  $\frac{81}{20}\leq \lambda\leq 6,$  by setting 
\begin{equation*}
X=\left[\begin{array}{cccc}
0 & 0 & 1 & \frac{\lambda}{6}\\
\sqrt{\frac{\lambda}{20}} & 1 & 0 & 0
\end{array}
\right]^{t} ,
\end{equation*}
then $X^{t}\otimes X=I_{2}$ and $X^{t}\otimes C \otimes X=\lambda I_{2}$. Thus $[2,6] \subseteq \Lambda_2^{\max}(C)$. 

Conversely, if $\lambda \in \Lambda_2^{\max}(C),$ then there exists some 
\begin{equation*}
X=\left[\begin{array}{cccc}
x_{1} & x_{2} & x_{3} & x_{4}\\
y_{1} & y_{2} & y_{3} & y_{4}
\end{array}
\right]^{t} \in \mathcal{X}_{4\times 2}
\end{equation*}
such that $X^{t}\otimes C\otimes X=\lambda I_{2}$. Then
\begin{equation}
\max\{20 x_{1}^{2}, 9 x_{1}x_{2}, x_{2}^{2}, 2x_{3}^{2}, 6x_{3}x_{4}, 3x_{4}^{2}\}=\lambda,  
\end{equation}
\begin{equation}
\max\{20 y_{1}^{2}, 9 y_{1}y_{2}, y_{2}^{2}, 2y_{3}^{2}, 6y_{3}y_{4}, 3y_{4}^{2}\}=\lambda.
\end{equation}
For the case $\lambda<2,$  from Relations (2.4) and (2.5), we deduce that $x_i,y_i\neq 1$ for $i=1,3,4$. So,  $x_{2}=1$ and $y_{2}=1$, which is a contradiction. Similarly, for the case $\lambda>6,$ we deduce that $20x_{1}^{2}=\lambda$ or $9x_{1}x_{2}=\lambda,$ which implies  $x_{1}\neq 0$. In the same way, we find  $y_{1}\neq 0,$ which also leads to a contradiction. Therefore, we conclude that  $\lambda \in [2, 6],$ completing the proof.


To prove that $\Lambda_3^{\max}(C)=\emptyset,$ we assume that  $\Lambda_3^{\max}(C)\neq \emptyset$. Then there exists some $\lambda\in \Lambda_3^{\max}(C)$ such that $X^{t}\otimes C\otimes  X=\lambda I_{3},$ where 
\begin{equation*}
X=\left[\begin{array}{ccc}
x_{1} & y_{1} & z_{1}\\
x_{2} & y_{2} & z_{2}\\
x_{3} & y_{3} & z_{3}\\
x_{4} & y_{4} & z_{4}
\end{array}
\right]\in \mathcal{X}_{4\times 3}.
\end{equation*}
Then
\begin{equation}\label{2.6}
\max\{20x_{1}y_{1}, 9x_{1}y_{2}, 7x_{2}y_{1}, x_{2}y_{2}, 2x_{3}y_{3}, 6x_{3}y_{4}, 5x_{4}y_{3}, 3x_{4}y_{4} \}=0,
\end{equation}
\begin{equation}\label{2.7}
\max\{20x_{1}z_{1}, 9x_{1}z_{2}, 7x_{2}z_{1}, x_{2}z_{2}, 2x_{3}z_{3}, 6x_{3}z_{4}, 5x_{4}z_{3}, 3x_{4}z_{4} \}=0,
\end{equation}
and
\begin{equation}\label{2.8}
\max\{20y_{1}z_{1}, 9y_{1}z_{2}, 7y_{2}z_{1}, y_{2}z_{2}, 2y_{3}z_{3}, 6y_{3}z_{4}, 5y_{4}z_{3}, 3y_{4}z_{4} \}=0.
\end{equation}
Since the first column of  $X$ contains at least one nonzero entry first assume that $x_{1}\neq 0$. From Relation (\ref{2.6}), we deduce $y_{1}=y_{2}=0$. Similarly, from Relation (\ref{2.7}), we conclude that  $z_{1}=z_{2}=0$. On the other hand, the second column must also contain at least one nonzero entry. If $y_{3}\neq 0,$ then from Relation (\ref{2.8}), we again have $z_{3}=z_{4}=0,$ which implies that the last column is zero, contradicting the definition of $X$. For the case $y_{4}\neq 0,$  similarly (\ref{2.8}) leads to $z_{3}=z_{4}=0$, which also implies that the last column is zero, again contradicting the definition of $X$. The remaining cases of nonzero entries in the first column of $X$ follow in a similar manner. Therefore, $\Lambda_3^{\max}(C)=\emptyset$ and the proof is complete.
 
 \end{example}

We now present a result on $\Lambda_{k}^{max}(A)$ for particular Toeplitz matrices $A$.

\begin{theorem}
Suppose that $n \ge 3$ and that $A \in M_{n}(\mathbb{R}_{+})$ is the following  Toeplitz  matrix 
\[
A=\left [ \begin{array}{cccccc}
a_{0}  &  a_{-1} & a_{-2}  & \cdots & a_{-(n-2)} & 0 \\ 
a_{1}  &   a_{0}  & a_{-1} & \ddots  &  & a_{-(n-2)}  \\
a_{2} &    a_{1}  & \ddots &  \ddots & \ddots &  \vdots\\
\vdots & \ddots & \ddots & \ddots &  a_{-1} &  a_{-2}\\
 a_{n-2} &  & \ddots & a_{1} &  a_{0} & a_{-1} \\
 0  &  a_{n-2} & \cdots & a_{2} & a_{1} & a_{0}
\end{array}
\right],
\]
where $a_{i}\neq 0,~ -(n-2) \leq i \leq (n-2)$.
Then
\[
\Lambda_{1}^{max}(A)=[a_{0},  \displaystyle \max_{-(n-2) \leq i \leq (n-2)}a_{i}], ~ \Lambda_{2}^{max}(A)=\{ a_{0} \}
\]
and
\[
\Lambda_{k}^{max}(A)=\emptyset,~ \forall~   3 \leq k \leq n.
\]
\end{theorem}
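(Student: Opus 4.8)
The plan is to reduce the whole statement to a combinatorial analysis of the supports of the columns of an isometry $X$, using the crucial feature that the \emph{only} vanishing entries of $A$ are the two corners $a_{1n}$ and $a_{n1}$. Part~1 I would dispatch immediately: by (\ref{eq3}) and Theorem~\ref{WmaxA} we have $\Lambda_1^{max}(A)=W_{max}(A)=[\min_i a_{ii},\ \max_{i,j} a_{ij}]$. Since every diagonal entry equals $a_0$, the left endpoint is $a_0$; and since every nonzero band entry is one of the $a_i$ (all strictly positive, the corners being $0$), the largest entry is $\max_{-(n-2)\le i\le n-2} a_i$. This gives the first assertion with no further work.

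For Parts~2 and~3 the step I would isolate first is the following observation about cross terms. For nonnegative $u,v\in\mathbb{R}_+^n$ one has $u^t\otimes A\otimes v=\max_{i,j}(u_i\,a_{ij}\,v_j)=0$ if and only if $a_{ij}=0$ whenever $u_i,v_j\neq 0$. Because the zero set of $A$ is exactly $\{(1,n),(n,1)\}$, this means $\mathrm{supp}(u)\times\mathrm{supp}(v)\subseteq\{(1,n),(n,1)\}$; reading off first and second coordinates separately then forces $\mathrm{supp}(u)\subseteq\{1,n\}$ and $\mathrm{supp}(v)\subseteq\{1,n\}$ whenever both supports are nonempty.

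Now suppose $\lambda\in\Lambda_k^{max}(A)$ with witness $X=[x_1,\dots,x_k]\in\mathcal{X}_{n\times k}$, $k\ge 2$, and set $S_r:=\mathrm{supp}(x_r)$. The isometry condition $X^t\otimes X=I_k$ makes each $S_r$ nonempty with the $S_r$ pairwise disjoint, while the off-diagonal relations $x_r^t\otimes A\otimes x_s=0$ for $r\neq s$, combined with the observation above, force $S_r\subseteq\{1,n\}$ for every $r$. For Part~3, i.e. $k\ge 3$, this is already contradictory, since $\{1,n\}$ cannot contain three pairwise disjoint nonempty subsets; hence $\Lambda_k^{max}(A)=\emptyset$. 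For Part~2, i.e. $k=2$, the disjoint nonempty subsets $S_1,S_2\subseteq\{1,n\}$ must be $\{1\}$ and $\{n\}$ in some order, so each $x_r$ is a single normalized coordinate and $\{x_1,x_2\}=\{e_1,e_n\}$; then $\lambda=x_1^t\otimes A\otimes x_1=a_{11}=a_0$. Conversely, $X=[e_1,e_n]$ is an isometry with $X^t\otimes A\otimes X=a_0 I_2$, so $\Lambda_2^{max}(A)=\{a_0\}$.

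The only genuine content, and the point I would flag as the crux, is the structural observation that the cross-orthogonality conditions confine every column support to $\{1,n\}$; once that is in place, both remaining claims collapse to a one-line pigeonhole count. The routine checks I would leave implicit are that $\max_i a_i$ is actually attained among the matrix entries (so the interval endpoints in Part~1 are correct) and that the explicit choice $X=[e_1,e_n]$ satisfies both $X^t\otimes X=I_2$ and $X^t\otimes A\otimes X=a_0 I_2$.
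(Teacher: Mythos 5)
Your proof is correct, and it takes a genuinely different route from the paper's. The paper proves the same three claims by direct coordinate manipulation: for $k=2$ it expands the single condition $(X^{t}\otimes A\otimes X)_{12}=0$ into the list of bilinear equations $x_{i1}x_{j2}=0$ and deduces by inspection that $X$ must be $[e_1,e_n]$ or $[e_n,e_1]$; for $k=3$ it runs a multi-case analysis on which entry of the first column of $X$ is nonzero (treating $n>3$ and $n=3$ separately, the latter via the fact that $\mathcal{X}_{3\times 3}=\mathcal{U}_3$ consists of permutation matrices, so $X^t\otimes A\otimes X$ is a symmetric rearrangement of $A$ and cannot equal $\lambda I_3$); and the cases $4\leq k\leq n$ are then covered by the inclusion chain (\ref{eq4}). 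You instead isolate the structural lemma that $u^{t}\otimes A\otimes v=0$ for nonnegative $u,v$ forces $\mathrm{supp}(u)\times\mathrm{supp}(v)$ into the zero set of $A$, which here is exactly $\{(1,n),(n,1)\}$ (valid since $a_i\neq 0$ and $a_i\geq 0$ give $a_i>0$ on every band), so projecting confines every column support to $\{1,n\}$; combined with the isometry conditions (nonempty, pairwise disjoint supports, maximal entry $1$), the $k=2$ case collapses to $\{x_1,x_2\}=\{e_1,e_n\}$ and all cases $k\geq 3$ die at once by pigeonhole on the two-element set $\{1,n\}$, with no separate $n=3$ case and no appeal to (\ref{eq4}). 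What your approach buys is uniformity and generality: the same support lemma yields emptiness of $\Lambda_k^{max}$ for any nonnegative matrix whose zero pattern cannot accommodate $k$ pairwise disjoint column supports, whereas the paper's argument is elementary and self-contained but longer and case-heavy. Your Part 1 matches the paper's verbatim (both are immediate from (\ref{eq3})), and your routine verifications — that $X=[e_1,e_n]$ satisfies $X^{t}\otimes X=I_2$ and $X^{t}\otimes A\otimes X=a_0I_2$, and that the interval endpoints are attained — all check out.
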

\begin{proof}
By (\ref{eq3}) we have
\[
\Lambda_{1}^{max}(A)=[a_{0},  \displaystyle \max_{-(n-2) \leq i \leq (n-2)}a_{i}].
\]
Now, we show that $ \Lambda_{2}^{max}(A)=\{ a_{0} \}$. At first, we show that $\{ a_{0} \}\subseteq\Lambda_{2}^{max}(A)$. For this, put $X=\begin{bmatrix}
1 &0\\
0&0\\
\vdots &\vdots\\
0&1
\end{bmatrix}\in\mathcal{X}_{n\times 2}$, then we have $X^t\otimes A\otimes X=a_0I_2$. Therefore, $a_0\in\Lambda_{2}^{max}(A)$. Now, let $\lambda\in\Lambda_{2}^{max}(A)$ be given. So there exists $X=(x_{ij})\in\mathcal{X}_{n\times 2}$ such that $X^t\otimes A\otimes X=\lambda I_2$. Therefore, we have $(X^t\otimes A\otimes X)_{12}=0$, and so
\[
\displaystyle\max_{1\leq i,j\leq n}\{x_{1i}a_{ij}x_{j2}\}=0.
\]
Hence
\begin{eqnarray*}
&&x_{11}x_{j2}=0\hspace*{3cm}\forall j=1,\dots ,n-1\\
&&x_{21}x_{j2}=0\hspace*{3cm}\forall j=1,\dots ,n\\
&&\hspace*{3.4cm} \vdots\\
&&x_{(n-1)1}x_{j2}=0\hspace*{2.3cm}\forall j=1,\dots ,n\\
&&x_{n1}x_{j2}=0\hspace*{3cm}\forall j=2,\dots ,n.\\
\end{eqnarray*}
It readily follows that  $X=\begin{bmatrix}
1 &0\\
0&0\\
\vdots &\vdots\\
0 &1
\end{bmatrix}$ or $X=\begin{bmatrix}
0 &1\\
0&0\\
\vdots &\vdots\\
1 &0
\end{bmatrix}$. Then $X^t\otimes A\otimes X=a_0I_2$. Therefore $\lambda =a_0$. 

Next we verify that there is no  $\lambda\in\mathbb{R}_+,  X\in\mathcal{X}_{n\times 3}(\mathbb{R}_+)$ such that $X^t\otimes A\otimes X=\lambda I_3$. Assume that such $\lambda\in\mathbb{R}_+$  and $X\in \mathcal{X}_{n\times 3}$ exist for $n>3,$ so that $X^{t}\otimes A\otimes X=\lambda I_{3}$. Consider the $(1, 2)-$entry of the matrix product $(X^{t}\otimes A\otimes X)_{12}=0$. This implies that 
\begin{equation}\label{xtAx12}
\max_{1\leq i, j\leq n}\{x_{i1}a_{ij}x_{j2}  \}=0.
\end{equation}
We will prove  that $x_{i1}=0$  for all $1\leq i\leq n$, which will be in contradiction with $X\in\mathcal{X}_{n\times 3}(\mathbb{R}_+)$.

Assume $x_{11}\neq 0$. Note that all entries in the first row of $A$ are nonzero except for the last entry, i.e., $A_{1j}\neq 0$ for $1\leq j\leq n-1$. From the equation above, it follows that $x_{j2}=0$ for   $1\leq j \leq n-1,$ implying that the second column of $X $ has $x_{n2}=1$.

Now, consider the $(3, 2)-$entry of the matrix product  $(X^{t}\otimes A\otimes X)_{32}=0$. This implies that 
\begin{equation}\label{XtAX32}
\max_{1\leq i, j\leq n}\{x_{i3}a_{ij}x_{j2}  \}=0.
\end{equation}
We already know that $x_{n2}=1,$ and since $A_{in} \neq 0$ for $2\leq i \leq n,$ it follows that  $x_{i3}=0$ for $2\leq i\leq n$.
 This means that all entries in the third column of $X$ are zero except for the first entry, implying $x_{13}=1$. However, this leads to a contradiction, as $x_{11}\neq 0,$ which is not possible given the structure of $X$.
To extend this argument, consider the entry $x_{21}$. If  $x_{21}\neq 0,$ then since $A_{2j}\neq 0$ for all $1\leq j\leq n,$  we have $x_{j2}=0$ for all $1\leq j\leq n,$ contradicting the nonzero requirement for the second column of $X$. By similar reasoning, we can show that $x_{31}=\cdots=x_{(n-1)1}=0$.  Finally, suppose $x_{n1}\neq 0$. Since $A_{nj}\neq 0$ for $2\leq j \leq n$ we have $x_{j2}=0$ for $2\leq j\leq n$. This implies  that all the entries in the second column of $X$ are zero except for the first entry, leading to  $x_{12}=1$. But we know  $A_{i1}\neq 0$ for $1\leq i\leq n-1,$ which implies  $x_{i3}=0$ for $1\leq i\leq n-1$. This leads to $x_{31}=1,$ which, together with $x_{n1}\neq 0,$ again contradicts the definition of  $X$.

For the case $n=3,$ let
\begin{equation*}
A=\left [ \begin{array}{ccc}
a_{0}  &  a_{-1} & 0 \\ 
a_{1}  &   a_{0}  & a_{-1}\\
0  &    a_{1}  &  a_{0}
\end{array}
\right].
\end{equation*}
 We want to show that $\Lambda_{3}^{max}(A)=\emptyset$ as well. Suppose, by contradiction, that $\Lambda_{3}^{max}(A)\neq \emptyset$.  Then there exists some $\lambda\in \Lambda_{3}^{max}(A)$ and $X\in \mathcal{X}_{3\times 3}$ such that $X^{t}\otimes A \otimes X=\lambda I_{3}$. Since $X$ is a permutation matrix containing only zeros and ones, $X^{t}\otimes A \otimes X$ is a permutation of $A,$ 
 which cannot be equal to $\lambda I,$ completing the proof.
\end{proof}
\begin{example}
 Let 
\begin{equation*}
A=\left [ \begin{array}{cccc}
3  &  4 & 2  & 0 \\ 
5  &   3  & 4 & 2\\
2 &    5  & 3 &  4\\
0  &  2  &  5  &   3  
\end{array}
\right],
B=\left [ \begin{array}{ccccc}
3  &  4 & 2  & 7 & 0 \\ 
8  &   3  & 4 & 2 & 7\\
6 &    8  & 3 &  4 & 2\\
2  &  6  &  8  &   3  & 4\\
0 & 2 & 6 & 8 & 3
\end{array}
\right]
\text{and}\ 
C=\left [ \begin{array}{cccccc}
4  &  7 & 6  & 3 & 5 & 0\\ 
2  &   4  & 7 & 6 & 3 & 5\\
8 &    2  & 4 & 7 & 6 & 3\\
9  &  8  &  2  &   4  & 7 & 6\\
1 & 9 & 8 & 2 &  4 & 7 \\
0 & 1 & 9 & 8 &  2 & 4 
\end{array}
\right].
\end{equation*}
Then
\begin{eqnarray*}
\Lambda_{1}^{max}(A)=[3, 5],\Lambda_{2}^{max}(A)=\{3\}~ \mathrm{and}\  \Lambda_{3}^{max}(A)=\Lambda_{4}^{max}(A)=\emptyset, \\
\Lambda_{1}^{max}(B)=[3, 8],~\Lambda_{2}^{max}(B)=\{3\} ~ \mathrm{and}~ \Lambda_{k}^{max}(B)=\emptyset~ \forall~3 \leq k \leq 5,\\
\Lambda_{1}^{max}(C)=[4, 9],~\Lambda_{2}^{max}(C)=\{4\} ~ \mathrm{and}  ~\Lambda_{k}^{max}(C)=\emptyset~ ~\forall~3 \leq k \leq 6.
\end{eqnarray*}
\end{example}


\medskip

Higher rank numerical radius of $A$ is defined by (see e.g. \cite{CGLTW})
\begin{equation*}
\omega_{k}^{\Lambda}(A)=\sup\{\vert \mu \vert: \mu \in \Lambda_{k}(A) \},
\end{equation*}
where we use  the convention that $\omega_{k}^{\Lambda}(A)=-\infty$ if $\Lambda_{k}(A)=\emptyset$. Next we consider an analogue of higher rank numerical radius in max algebra setting.
\begin{definition}
Let $A\in M_n(\mathbb{R}_+)$ and let $1\leq k \leq n$ be a positive integer. The max higher rank numerical radius of $A$ is defined and denoted by
\begin{equation*}
\omega_{k}^{\Lambda_{max}}(A)=\sup\{\lambda: \lambda\in \Lambda_{k}^{max}(A) \},
\end{equation*}
where we use the convention that $\omega_{k}^{\Lambda_{max}}(A)=-\infty$ if $\Lambda_{k}^{max}(A)=\emptyset$.
\end{definition}

A basic inequality concerning the higher rank numerical radius is provided in the following.

\begin{proposition}
Let $Z\in M_{n}(\mathbb{R}_{+})$ and let $1\leq k \leq n$ be a positive integer. If $Z=\displaystyle\bigoplus_{i=1}^{s}x_{i}\otimes y_{i}^{t}$ for some $x_{i}, y_{i}\in \mathbb{R}_{+}^{n},  1\leq i \leq s$. Then
\[
\Lambda_{k}^{max}(Z)\subseteq \displaystyle\bigcup_{j=1}^{s} W_{max}(x_j\otimes y_j^{t}).
\]
Consequently,
\begin{equation*}
\omega_{k}^{\Lambda_{max}}(A)\leq \displaystyle\bigoplus_{j=1}^{s}\Vert x_j\otimes y_j^{t}\Vert.
\end{equation*}
\end{proposition}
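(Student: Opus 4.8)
The plan is to take an arbitrary $\lambda\in\Lambda_k^{max}(Z)$ and show it already lies in one of the sets $W_{max}(x_j\otimes y_j^t)$; once the inclusion is in hand, the numerical radius bound is an immediate consequence of Theorem \ref{WmaxA}. By Definition \ref{maxrankknumericalrange} there is an isometry $X=[u_1,\dots ,u_k]\in\mathcal{X}_{n\times k}$ with $X^t\otimes Z\otimes X=\lambda I_k$. I would use the information contained in a single diagonal entry only: reading off the $(1,1)$ position gives $u_1^t\otimes Z\otimes u_1=\lambda$, while the relation $X^t\otimes X=I_k$ forces $u_1^t\otimes u_1=1$, so $u_1$ is an admissible vector in the definition of each max numerical range appearing below.

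Next I would expand the scalar $u_1^t\otimes Z\otimes u_1$ using $Z=\bigoplus_{i=1}^s x_i\otimes y_i^t$. Writing the max-product explicitly,
\[
u_1^t\otimes Z\otimes u_1=\max_{1\le a,b\le n}(u_1)_a Z_{ab}(u_1)_b=\max_{1\le a,b\le n}(u_1)_a(u_1)_b\max_{1\le i\le s}(x_i)_a(y_i)_b.
\]
Since the scalars $(u_1)_a(u_1)_b$ are nonnegative, multiplication distributes over the maximum and the two maxima may be interchanged, yielding
\[
\lambda=u_1^t\otimes Z\otimes u_1=\max_{1\le i\le s}\bigl(u_1^t\otimes (x_i\otimes y_i^t)\otimes u_1\bigr).
\]
Let $j$ be an index attaining this maximum. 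Because $u_1^t\otimes u_1=1$, the number $u_1^t\otimes (x_j\otimes y_j^t)\otimes u_1$ is by definition an element of $W_{max}(x_j\otimes y_j^t)$, whence $\lambda\in W_{max}(x_j\otimes y_j^t)\subseteq\bigcup_{j=1}^s W_{max}(x_j\otimes y_j^t)$. As $\lambda$ was arbitrary, the claimed inclusion follows.

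For the consequence, observe that $(x_j\otimes y_j^t)_{ab}=(x_j)_a(y_j)_b$, so Theorem \ref{WmaxA} gives $W_{max}(x_j\otimes y_j^t)=[\min_a (x_j)_a(y_j)_a,\ \max_{a,b}(x_j)_a(y_j)_b]$, whose right endpoint is $\max_{a,b}(x_j\otimes y_j^t)_{ab}=\Vert x_j\otimes y_j^t\Vert$. Hence every $\lambda\in\Lambda_k^{max}(Z)$ satisfies $\lambda\le\Vert x_j\otimes y_j^t\Vert\le\bigoplus_{j=1}^s\Vert x_j\otimes y_j^t\Vert$ for the appropriate $j$; taking the supremum over $\lambda$ produces the stated bound (and when $\Lambda_k^{max}(Z)=\emptyset$ the inequality holds trivially by the convention $\omega_k^{\Lambda_{max}}(Z)=-\infty$).

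The computations are all routine; the one place I would be careful is the interchange of the two maxima in the displayed expansion, which is precisely the statement that nonnegative scalar multiplication is monotone and distributes over $\oplus=\max$. I would make this step explicit rather than implicit, since it is the algebraic heart of the argument: it is what lets a single diagonal entry of the isometry relation suffice and explains why the off-diagonal vanishing conditions in the definition of $\Lambda_k^{max}$ are never needed.
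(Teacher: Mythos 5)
Your proof is correct and takes essentially the same route as the paper's: read off a single diagonal entry of the isometry relation $X^{t}\otimes Z\otimes X=\lambda I_{k}$, use distributivity of nonnegative multiplication over $\oplus=\max$ to pull the decomposition $Z=\bigoplus_{i=1}^{s}x_{i}\otimes y_{i}^{t}$ through the quadratic form, and select the index $j$ attaining the maximum, so that $\lambda\in W_{\max}(x_{j}\otimes y_{j}^{t})$. If anything, you are more complete than the paper, whose proof consists of the terse step ``hence there exist $i$ and $j$'' and stops at the inclusion, whereas you make the interchange of maxima explicit and actually derive the radius bound via Theorem \ref{WmaxA}, including the $\Lambda_{k}^{\max}(Z)=\emptyset$ convention.
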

\begin{proof}
Let $\lambda \in \Lambda_{k}^{max}(Z)$ be given. By Definition \ref{maxrankknumericalrange} 
\[
X^{t}\otimes Z\otimes X=\lambda I_{k}~ \mathrm{for~ some}~ X=[X_{1}, \ldots, X_{k}]\in \mathcal{X}_{n\times k}.
\]
Hence  there exist $i\leq k\leq k$ and $1\leq j \leq s$ such that
\[
\lambda=X_{i}^{t}\otimes x_{j}\otimes y_{j}^{t}\otimes X_{i}, 1\leq i \leq k.
\]
So $\lambda\in W_{\max}(x_j\otimes j_j^t)$, because $X_i\in\mathbb{R}_+^n$ and $X_i^t\otimes X_i=1$. Then 
\[
\lambda\in \displaystyle\bigcup_{j=1}^{s} W_{max}(x_j\otimes y_j^{t}),
\]
which completes the proof. 
\end{proof}


\section{Some properties of max numerical ranges}
Recall that the conventional numerical radius of $A\in M_{n}(\mathbb{C})$ is defined by
\begin{equation*}
\omega(A)=\sup\{\vert z \vert: z\in W(A)\}.  
\end{equation*}

The max numerical radius of $A$ equals
\[
\Vert A \Vert=\sup\{z: z\in W_{max}(A)\}=\displaystyle\max_{1\leq i, j\leq n}a_{ij},
\]
which  can be considered as operator norm in max algebra sense, since
$$\|A\|= \max \{\|A\otimes x\|: \; \|x\|\le 1, \; x\ge 0\}.$$

 In the following remark we state some of its basic properties, which are well-known or easy to verify. 
 \begin{remark}\label{th4}
Let  $A\in M_n(\mathbb{R}_+)$. Then the following assertions are all valid:
 \begin{itemize}
 \item[(i)] $\Vert A \Vert\geq 0$;
 \item[(ii)] $\Vert A \Vert=0$ if and only if $A=0$;
 \item[(iii)] $\Vert \alpha A\Vert=\alpha \Vert A \Vert$, where $\alpha\in\mathbb{R}_+$; 
 \item[(iv)] $\Vert A \Vert=\Vert A^t \Vert$;
 \item[(v)] $\Vert B \Vert=\Vert A \Vert$, where $B=U^t\otimes A\otimes U$ such that  $U\in\mathcal{U}_n$;
 \item[(vi)] $\Vert A\oplus B \Vert=\Vert A \Vert \oplus \Vert B \Vert$, where  $B\in M_n(\mathbb{R}_+)$;
 \item[(vii)] $\Vert A\otimes B \Vert\leq \Vert A\Vert \Vert B\Vert$;
 \item[(viii)] $\Vert A_{\otimes}^m \Vert \leq \Vert A \Vert^{m},$  where $m$ is a positive integer.
\item[(ix)]  If $x,y\in\mathbb{R}_+^n$ are such that $\|x\| \le 1$, $\|y\| \le 1$, then  
$\Vert A\otimes x\otimes y^t\Vert\leq\Vert A\Vert.$
 \end{itemize}
 \end{remark}

Let $A \in M_{n}(\mathbb{R}_{+})$ and   $1 \le k \leq n$.   The {\em max $k$-numerical range $W_{\max}^k(A)$ of $A$} in  max algebra was introduced in \cite[Section 4]{TZPA} (see also \cite{Erratum}) and is defined by
%
\begin{eqnarray*}
W_{max}^{k}(A) &=&\{\bigoplus_{i=1}^{k} \displaystyle (x^{(i)})^{t}\otimes A \otimes x^{(i)}:  ~X=[x^{(1)}, x^{(2)}, \ldots, x^{(k)}] \in \mathcal{X}_{n \times k} \}\\
&=& \{tr_{\otimes}(X^{t}\otimes A \otimes X):~ X=[x^{(1)}, x^{(2)}, \ldots, x^{(k)}] \in \mathcal{X}_{n \times k}   \}. 
\end{eqnarray*}
Note that $W_{\max}(A)=W_{\max}^1(A)\supseteq W_{\max}^2(A)\supseteq\cdots \supseteq W_{\max}^n(A)$ (\cite[Theorem 3.4(v)]{Erratum}).

The following result was proved in \cite[Theorem 3.13]{Erratum}. 

 \begin{theorem}\label{th1}
 Suppose that $A=(a_{ij})\in M_n(\mathbb{R}_+)$ and let $1\leq k< n$ be a positive integer. Then
 \[
 W_{\max}^k(A)=[c,d],
 \]
 where $c=\min\{\displaystyle\oplus_{j=1}^ka_{i_ji_j}:~1\leq i_1<i_2<\cdots <i_k\leq n\}$ and $d=\displaystyle\max_{1\leq i,j\leq n}a_{ij}$. 
 
  Moreover,
    \begin{equation}\label{WmaxNnumberone}
W_{max}^{n}(A)=\{\displaystyle\max_{1\leq i\leq n}a_{ii} \}.
\end{equation}
 \end{theorem}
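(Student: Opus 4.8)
The plan is to prove the two inclusions $W_{\max}^k(A)\subseteq[c,d]$ and $[c,d]\subseteq W_{\max}^k(A)$ separately, and then to dispatch the case $k=n$ as a degeneration. For the first inclusion, fix any $X=[x^{(1)},\dots,x^{(k)}]\in\mathcal{X}_{n\times k}$. Each column satisfies $(x^{(i)})^t\otimes x^{(i)}=1$, so by Theorem~\ref{WmaxA} the scalar $(x^{(i)})^t\otimes A\otimes x^{(i)}$ lies in $[\min_j a_{jj},\,d]$; taking the maximum over $i$ gives $tr_{\otimes}(X^t\otimes A\otimes X)\le d$. For the lower bound I would use that the equation $X^t\otimes X=I_k$ forces, via the diagonal entries, each column to have some entry equal to $1$, say at index $p_i$, and forces, via the off-diagonal entries, the column supports to be pairwise disjoint, so that $p_1,\dots,p_k$ are distinct. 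Since $(x^{(i)})^t\otimes A\otimes x^{(i)}\ge (x^{(i)})_{p_i}\,a_{p_ip_i}\,(x^{(i)})_{p_i}=a_{p_ip_i}$, the value of $X$ is at least $\max_i a_{p_ip_i}$, which is $\ge c$ because $c$ is the minimum of $\bigoplus_j a_{i_ji_j}$ over all $k$-subsets. This yields $W_{\max}^k(A)\subseteq[c,d]$.

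Next I would verify that both endpoints are attained. For $c$, let $S=\{i_1<\dots<i_k\}$ realize the minimum defining $c$ and take $X=[e_{i_1},\dots,e_{i_k}]$; this is an isometry whose value is $\max_j a_{i_ji_j}=c$. For $d$, write $d=a_{pq}$. If $p=q$ I use the single basis column $e_p$; if $p\ne q$ I use a column with $1$'s in positions $p$ and $q$ and zeros elsewhere, whose value is $\max\{a_{pp},a_{pq},a_{qp},a_{qq}\}=d$. In both cases I complete the matrix with $k-1$ distinct basis vectors indexed outside $\{p,q\}$, which is possible precisely because $k<n$ leaves at least $n-2\ge k-1$ free coordinates; the resulting isometry has value $d$.

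The heart of the argument, and the step I expect to be the main obstacle, is showing that the whole interval $[c,d]$ is covered, i.e. that $W_{\max}^k(A)$ is convex. I would deduce this from connectedness: the map $f(X)=tr_{\otimes}(X^t\otimes A\otimes X)$ is continuous and $\mathcal{X}_{n\times k}$ is a compact subset of $[0,1]^{nk}$, so it suffices to prove that $\mathcal{X}_{n\times k}$ is path-connected; then $f(\mathcal{X}_{n\times k})$ is a compact interval which, by the previous two paragraphs, must equal $[c,d]$. To establish path-connectedness I would first deform an arbitrary isometry to a basis configuration: for each column fix a coordinate $p_i$ where the entry equals $1$ and let $t\mapsto x^{(i)}(t)$ scale all other entries down to $0$ while keeping the peak at $1$; supports only shrink, so the path stays in $\mathcal{X}_{n\times k}$ and terminates at $[e_{p_1},\dots,e_{p_k}]$. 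It then remains to join any two basis configurations, which reduces to connecting the associated $k$-subsets of $\{1,\dots,n\}$; I would do this one index at a time by sliding a single column from $e_p$ to a currently unused coordinate $e_q$ along a max-norm-one path supported on $\{p,q\}$ (again available since $k<n$). The delicate point is exactly the bookkeeping needed to check that the support-disjointness constraint is preserved throughout each slide and through the chain of such moves.

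Finally, for $k=n$ the set $\mathcal{X}_{n\times n}=\mathcal{U}_n$ consists exactly of permutation matrices, so $X=[e_{\sigma(1)},\dots,e_{\sigma(n)}]$ and $f(X)=\max_i a_{\sigma(i)\sigma(i)}=\max_i a_{ii}$ for every permutation $\sigma$; hence $W_{\max}^n(A)=\{\max_i a_{ii}\}$, which is the ``moreover'' assertion.
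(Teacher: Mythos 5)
Your proof is correct, and it takes a genuinely different route from the one the paper relies on: the paper does not reprove Theorem \ref{th1} but quotes it from \cite[Theorem 3.13]{Erratum}, where (as in the $k=1$ case, Theorem \ref{WmaxA}) the argument is constructive, exhibiting for each $\lambda\in[c,d]$ an explicit isometry whose max trace equals $\lambda$. Your treatment of the bounds and endpoints matches the standard one and is sound: $X^t\otimes X=I_k$ indeed forces each column to have max-norm one with all entries in $[0,1]$ and forces pairwise disjoint supports, so the peak positions $p_1,\dots,p_k$ are distinct and the trace is at least $\max_i a_{p_ip_i}\geq c$; and your witnesses for $c$ and for $d=a_{pq}$ (the column supported on $\{p,q\}$, padded with $k-1\leq n-2$ basis columns, using $k<n$) are valid. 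Where you diverge is the interior of the interval: instead of constructing a witness for each $\lambda$, you prove path-connectedness of $\mathcal{X}_{n\times k}$ for $k<n$ and invoke the intermediate value theorem on the continuous map $f$. That connectedness claim is true and your two-stage argument works: the radial deformation only shrinks supports while keeping the chosen peak at $1$, and each slide $e_p\to e_q$ through vectors supported on $\{p,q\}$ with max-norm one preserves the isometry constraint because $q$ is unoccupied. The bookkeeping you flag (column assignments, not just the underlying $k$-subsets) is handled by the standard spare-coordinate shuffle, available exactly because $k<n$ --- and can even be sidestepped, since $tr_{\otimes}$ is invariant under permuting columns, so you only need a path from one $c$-attaining to one $d$-attaining isometry. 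Comparing the two approaches: the constructive route yields explicit extremal isometries, while yours is shorter on case analysis, gives compactness of $W_{\max}^k(A)$ for free, and isolates a fact of independent interest, namely that the max-algebra analogue of the Stiefel manifold $\mathcal{X}_{n\times k}$ is path-connected precisely when $k<n$; this degenerates at $k=n$, where $\mathcal{X}_{n\times n}=\mathcal{U}_n$ is the finite, totally disconnected set of permutation matrices, consistent with your (correct) separate verification that $W_{\max}^n(A)=\{\max_{1\leq i\leq n}a_{ii}\}$.
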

 

Next we list some observations on max $k$-numerical ranges.

\begin{remark}\label{remarkWkAZ}
(i) Let $A,B\in M_n(\mathbb{R}_+)$ and let $1\leq k \leq n$ be a positive integer. If  $W_{\max}^{k}(A\otimes x\otimes y^t)=W_{\max}^{k}(x\otimes y^t\otimes B)$ for all $x,y\in\mathbb{R}_+^n,$ by using \cite[Theorem 3.4(i)]{Erratum}(\cite[Theorem 4 (i)]{TZPA}) we have
\[
W_{\max}^{k}(A\otimes Z)\subseteq \displaystyle \bigoplus_{i=1}^{r}W_{\max}^{k}(x_{i}\otimes y_{i}^t\otimes B),
\]
where $Z=\displaystyle \bigoplus_{i=1}^{r}x_{i}\otimes y_{i}^{t}$.

(ii) Also,  if  $\Vert A\otimes x\otimes y^t\Vert=\Vert x\otimes y^t\otimes B\Vert$  $\mathrm{ for~ all} ~x,y\in\mathbb{R}_+^n,$ then
\[
\Vert A\otimes Z \Vert = \displaystyle \bigoplus_{i=1}^{r}\Vert x_{i}\otimes y_{i}^{t}\otimes B \Vert=\|Z\otimes B\|,
\]
where $Z=\displaystyle \bigoplus_{i=1}^{r}x_{i}\otimes y_{i}^{t}$.
\end{remark}

%
%
%
\begin{remark}
Let $A=x\otimes y^{t}\in M_{n}(\mathbb{R}_{+}),$ where  $x=[x_{1},\ldots , x_{n}]^t,  y=[y_{1}, \ldots , y_{n}]^t\in \mathbb{R}_{+}^{n}$ are two nonnegative vectors and  let $1\leq k \leq n$ be a positive integer. 
 Then it follows from Theorem \ref{th1} that
\begin{equation*}\label{Wkmaxxy}
W_{max}^{k}(A)=[\displaystyle\min\{\displaystyle\oplus_{j=1}^{k}x_{i_{j}}y_{i_{j}}: 1\leq i_{1}<i_{2}<\cdots<i_{k}< n \},  \displaystyle\max_{1\leq i,  j \leq n}(x_{i}y_{j})]
\end{equation*}
and 
$$W_{max}^{k}(A)= \{\max_{i=1, \ldots , n} x_i y_i\}$$
 since $a_{ij}=x_{i}y_{j}$  $\mathrm{for~ all}$~ $i, j=1, \ldots, n$.  Hence 
\[
W_{max}(A)=\displaystyle[\min_{1\leq i \leq n} (x_{i}y_{i}),  \max_{1\leq i,  j \leq n} (x_{i}y_{j})] \; \; \;\mathrm{and} \; \; \; \Vert A\Vert=\displaystyle\max_{1\leq i,  j \leq n} (x_{i}y_{j}).
\]
\end{remark}
\begin{remark}
Let $A=(a_{ij})\in M_n(\mathbb{R}_+)$ and $1\leq k < n$ be a positive integer. If
\[
\min\{\displaystyle\oplus_{j=1}^{k}a_{i_{j}i_{j}}: 1\leq i_{1}<i_{2}<\cdots<i_{k}\leq n \}=\displaystyle\min_{1\leq i\leq n}a_{ii},
\]
then $W_{max}^{k}(A)=W_{max}(A)$. Consequently, if there exist at least $k$ distinct indices $ i_{1}, \ldots, i_{k}$  such that  
$
a_{i_{1}i_{1}}=\cdots=a_{i_{k}i_{k}}=\displaystyle\min_{1\leq i \leq n}a_{ii},
$
then  $W_{max}^{k}(A)=W_{max}(A)$.
\end{remark}

\begin{remark}\label{Wmaxconv}
 Let $A=\begin{bmatrix}
 D & 0_{n_1\times n_2}\\
 0_{n_2\times n_1} & C
 \end{bmatrix}$, where $D\in M_{n_1}(\mathbb{R}_+)$ and $C\in M_{n_2}(\mathbb{R}_+)$. Then $W_{\max}(A)=conv_{\otimes}\displaystyle(W_{\max}(D)\cup W_{\max}(C))$.
 \end{remark}
 The following example shows that an analogue of Remark \ref{Wmaxconv} does not hold for $k>1$ in general.
 \begin{example}
 Let $A=\begin{bmatrix}
 D & 0\\
 0 & C
 \end{bmatrix}\in M_{6}(\mathbb{R}_{+}),$  where 
 \[
 D=\begin{bmatrix}
 3 & 2 & 4\\
 5 & 7 & 8\\
 2 & 3 & 6
 \end{bmatrix},~     
 C=\begin{bmatrix}
 5 & 3 & 10\\
 2 & 4 & 9\\
 3 & 8 & 7
 \end{bmatrix}.
 \]
 Thus
 \[
 W_{\max}^{2}(D)=[6,  8], ~W_{\max}^{2}(C)=[5,  10]
 \]
 and
 \[
conv_{\otimes}\displaystyle(W_{\max}^{2}(D)\cup W_{\max}^{2}(C))=[5, 10],~ W_{\max}^{2}(A)=[4, 10].
 \]
 \end{example}
 \begin{proposition}
Let  $A=(a_{ij})\in M_n(\mathbb{R}_+)$ and let $\frac{n}{2}\leq k<n$. Then  
\[
W_{\max}^k(A)=\displaystyle\min_{1\leq i_{1}<\cdots<i_{k}\leq n}\oplus_{j=1}^ka_{i_ji_j}\oplus W_{\max}^{n-k}(A).
\]
\end{proposition}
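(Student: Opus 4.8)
The plan is to reduce both sides to explicit closed intervals via Theorem \ref{th1} and then compare endpoints. For $1\leq m<n$ write $c_m=\min_{1\leq i_1<\cdots<i_m\leq n}\oplus_{j=1}^m a_{i_ji_j}$ and set $d=\max_{1\leq i,j\leq n}a_{ij}$. Since $k\geq n/2$ and $k<n$ force $1\leq n-k\leq k<n$, Theorem \ref{th1} applies to both ranges and gives $W_{\max}^k(A)=[c_k,d]$ and $W_{\max}^{n-k}(A)=[c_{n-k},d]$; in particular the two intervals share the same upper endpoint $d$.

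First I would unwind the scalar-$\oplus$-set operation appearing on the right-hand side. For a scalar $c_k\in\mathbb{R}_+$ and the interval $[c_{n-k},d]$ one has $c_k\oplus[c_{n-k},d]=\{\max\{c_k,w\}:w\in[c_{n-k},d]\}=[\max\{c_k,c_{n-k}\},\max\{c_k,d\}]$. Because each term $\oplus_{j}a_{i_ji_j}$ is a maximum of diagonal entries and hence at most $d$, we have $c_k\leq d$, so $\max\{c_k,d\}=d$ and the right-hand side collapses to $[\max\{c_k,c_{n-k}\},d]$. It then remains only to verify that $\max\{c_k,c_{n-k}\}=c_k$, i.e. that $c_k\geq c_{n-k}$.

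The one substantive point is this monotonicity, which I would prove by showing $c_m\leq c_{m+1}$ for every $1\leq m\leq n-2$: if $S$ is an $(m+1)$-subset attaining the minimum defining $c_{m+1}$, then deleting any single index yields an $m$-subset $S'$ with $\oplus_{i\in S'}a_{ii}\leq\oplus_{i\in S}a_{ii}=c_{m+1}$, whence $c_m\leq\oplus_{i\in S'}a_{ii}\leq c_{m+1}$. (Equivalently, $c_m$ is just the $m$-th smallest diagonal entry of $A$.) Since the hypothesis $\tfrac{n}{2}\leq k$ is exactly the inequality $n-k\leq k$, monotonicity applied across $m\in\{n-k,\dots,k\}$ gives $c_{n-k}\leq c_k$, so $\max\{c_k,c_{n-k}\}=c_k$ and the right-hand side equals $[c_k,d]=W_{\max}^k(A)$, completing the proof. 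I do not anticipate a genuine obstacle here: the argument is essentially bookkeeping built on Theorem \ref{th1}, and the only places requiring care are the correct reading of the scalar-times-set operation $c_k\oplus W_{\max}^{n-k}(A)$ and the essential use of $n-k\leq k$ supplied by $k\geq n/2$ (without which $\max\{c_k,c_{n-k}\}$ would be $c_{n-k}$ and the identity would fail).
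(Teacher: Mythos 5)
Your proof is correct and takes essentially the same route as the paper's: both sides are reduced via Theorem \ref{th1} to the intervals $[c_k,d]$ and $[c_{n-k},d]$, and the hypothesis $\frac{n}{2}\leq k$ (i.e.\ $n-k\leq k$) yields $c_{n-k}\leq c_k$, so that $c_k\oplus[c_{n-k},d]=[c_k,d]=W_{\max}^k(A)$. You simply spell out two steps the paper leaves implicit, namely the monotonicity $c_m\leq c_{m+1}$ and the computation of the scalar-$\oplus$-interval operation.
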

\begin{proof}
Since $n-k\leq k,$  it follows
\[
\displaystyle\min_{1\leq i_{1}<\cdots<i_{n-k}\leq n}\oplus_{j=1}^{n-k}a_{i_ji_j}\leq \displaystyle\min_{1\leq i_{1}<\cdots<i_{k}\leq n}\oplus_{j=1}^k a_{i_ji_j}
\]
and hence the result follows from Theorem  \ref{th1}.
\end{proof}

\begin{proposition}\label{pro2}
Let $1\le k <n$ and $A=\begin{bmatrix}
A_1& & 0\\
&\ddots\\
0& &A_k
\end{bmatrix}\in M_{n}(\mathbb{R}_{+})$ be a nonnegative matrix, where  $A_i\in M_{n_i}(\mathbb{R}_+),  i=1,\dots ,k$ and let  $W_{\max}(A_1)=\dots =W_{\max}(A_k)$. Then 
\[
W_{\max}(A)=W_{\max}^k (A).
\]
Consequently, $W_{\max}(A)=W_{\max}^m (A)$  for all $1\leq m \leq k$.
\end{proposition}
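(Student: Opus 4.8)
The plan is to compute both sides explicitly via Theorems \ref{WmaxA} and \ref{th1} and then show that their endpoints agree. First I would write $W_{\max}(A_i)=[a_i,b_i]$ for each $i$ using Theorem \ref{WmaxA}, so that $a_i=\min_j (A_i)_{jj}$ and $b_i=\max_{p,q}(A_i)_{pq}$. The hypothesis $W_{\max}(A_1)=\cdots=W_{\max}(A_k)$ forces all the left endpoints to coincide, say $a_1=\cdots=a_k=:a$, and likewise all the right endpoints $b_1=\cdots=b_k=:b$. Since $A$ is block diagonal, its diagonal entries are exactly the diagonal entries of the blocks taken together and its off-diagonal blocks vanish; hence $\min_i A_{ii}=\min_i a_i=a$ and $\max_{i,j}A_{ij}=\max_i b_i=b$. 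Applying Theorem \ref{WmaxA} to $A$ itself then yields $W_{\max}(A)=[a,b]$.

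Next I would apply Theorem \ref{th1}, which is legitimate since $1\le k<n$, to obtain $W_{\max}^k(A)=[c,d]$ with $d=\max_{i,j}A_{ij}=b$ and $c=\min\{\oplus_{j=1}^k A_{i_j i_j}:\,1\le i_1<\cdots<i_k\le n\}$. The heart of the argument is the claim $c=a$. Because $\oplus$ denotes the maximum, $c$ is the smallest possible value of the largest among any $k$ chosen diagonal entries, i.e. the $k$-th smallest diagonal entry of $A$. The key observation is that each block $A_i$ attains its minimal diagonal value $a_i=a$ at some diagonal position, so $A$ possesses at least $k$ diagonal entries equal to $a$, one contributed by each of the $k$ blocks, while $a$ is simultaneously the overall minimum diagonal entry. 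Consequently the $k$ smallest diagonal entries of $A$ are all equal to $a$, whence $c=a$. This gives $W_{\max}^k(A)=[a,b]=W_{\max}(A)$, as desired.

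Finally, the concluding assertion follows from the monotone chain $W_{\max}(A)=W_{\max}^1(A)\supseteq W_{\max}^2(A)\supseteq\cdots\supseteq W_{\max}^k(A)$ recorded after Theorem \ref{th1}: the two outer terms are now known to be equal, so every intermediate $W_{\max}^m(A)$ with $1\le m\le k$ is squeezed between two identical sets and therefore coincides with $W_{\max}(A)$. The only genuinely nontrivial step is the identity $c=a$, and even that reduces to the elementary counting fact that $A$ has at least $k$ diagonal entries equal to its minimal diagonal value; everything else is bookkeeping with the two cited theorems, so I do not anticipate any real obstacle beyond stating that counting argument cleanly.
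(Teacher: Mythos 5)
Your proof is correct and takes essentially the same route as the paper's: both apply Theorem \ref{WmaxA} to equate the block minima and maxima to common values $a$ and $b$, then note that each of the $k$ blocks contributes a diagonal entry equal to $a$ so that the quantity $c$ in Theorem \ref{th1} equals $a$, giving $W_{\max}^k(A)=[a,b]=W_{\max}(A)$, with the final assertion following from the inclusion chain. The only difference is that you make explicit the counting step ($c=a$ via at least $k$ diagonal entries equal to $a$) that the paper leaves implicit.
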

\begin{proof}
Let $A=(a_{ij})\in M_n(\mathbb{R}_+)$ and let $A_l=(a_{ij}^{(l)})\in M_{n_i}(\mathbb{R}_+), 1\leq l\leq k$. Since $W_{\max}(A_1)=\dots =W_{\max}(A_k)$, so 
\[
\min_{1\leq i\leq n_1}a_{ii}^{(1)}=\dots =\min_{1\leq i\leq n_k}a_{ii}^{(k)}:=a
\]
and 
\[
\max_{1\leq i,j\leq n_1}a_{ij}^{(1)}=\dots =\max_{1\leq i,j\leq n_k}a_{ij}^{(k)}:=b.
\] 
These equalities  imply that
\[
\min\{\oplus_{j=1}^ka_{i_ji_j}:1\leq i_1<\dots<i_k\leq n\}=a
\]
and 
\[
\max_{1\leq i,j\leq n}a_{ij}=b.
\]
Hence by Theorem \ref{th1}, $W_{\max}^k(A)=[a,b]=W_{\max}(A)$.
\end{proof}
\begin{remark}
In Proposition \ref{pro2} we may replace the condition  $W_{\max}(A_{1})=\cdots =W_{\max}(A_{k})$ with a weaker condition
\[
\min_{1\leq i\leq n_1}a_{ii}^{(1)}=\dots =\min_{1\leq i\leq n_k}a_{ii}^{(k)}=a.
\]
\end{remark}

The following results are  max algebra analogues of \cite[Theorem 2.7, Corollary 2.8-2.10]{CGLTW}.
\begin{proposition}\label{pro3}
Let $A\in M_n(\mathbb{R}_+)$ such that $A=\alpha U\oplus C$, where $\alpha\in\mathbb{R}_+, U\in\mathcal{U}_n$ and $C=(c_{ij})$ with $c_{ij}\leq\alpha$ for all $1\leq i,j\leq n$. Then $\Vert A\otimes B\Vert=\Vert B\otimes A\Vert$ for all $B\in M_n(\mathbb{R}_+)$. 
\end{proposition}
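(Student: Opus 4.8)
The plan is to show that both $\|A\otimes B\|$ and $\|B\otimes A\|$ equal the single quantity $\alpha\|B\|$, so that their equality is immediate. The entire argument rests on a factorization of the max-norm of a max-product, combined with the special row/column structure that the hypothesis forces on $A$.

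First I would record a general formula for the norm of a max-product. Using $\|M\|=\max_{i,j}m_{ij}$ together with $(A\otimes B)_{ij}=\max_k a_{ik}b_{kj}$, I obtain
\[
\|A\otimes B\|=\max_{1\le i,j,k\le n}a_{ik}b_{kj}=\max_{1\le k\le n}\Bigl(\max_i a_{ik}\Bigr)\Bigl(\max_j b_{kj}\Bigr),
\]
where the last equality uses that all entries are nonnegative, so the joint maximum over the independent indices $i$ and $j$ splits as a product of separate maxima. Writing $c_k(M)=\max_i m_{ik}$ for the largest entry of the $k$th column and $r_k(M)=\max_j m_{kj}$ for the largest entry of the $k$th row, this reads $\|A\otimes B\|=\max_k c_k(A)\,r_k(B)$, and symmetrically $\|B\otimes A\|=\max_k c_k(B)\,r_k(A)$.

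Second I would exploit the structure of $A=\alpha U\oplus C$. Since $U\in\mathcal{U}_n$ is a permutation matrix, each row and each column of $\alpha U$ has exactly one entry equal to $\alpha$ and all others equal to $0$; as every entry of $C$ satisfies $c_{ij}\le\alpha$, taking the entrywise maximum leaves every row maximum and every column maximum of $A$ equal to $\alpha$. Hence $c_k(A)=r_k(A)=\alpha$ for all $k$ (this includes the degenerate case $\alpha=0$, where $A=0$).

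Finally I would substitute. Because $c_k(A)=\alpha$ for every $k$,
\[
\|A\otimes B\|=\max_k \alpha\, r_k(B)=\alpha\max_k r_k(B)=\alpha\|B\|,
\]
using $\max_k r_k(B)=\max_{k,j}b_{kj}=\|B\|$; and because $r_k(A)=\alpha$ for every $k$,
\[
\|B\otimes A\|=\max_k c_k(B)\,\alpha=\alpha\max_k c_k(B)=\alpha\|B\|
\]
in the same way. Thus $\|A\otimes B\|=\|B\otimes A\|=\alpha\|B\|$, as desired. There is no serious obstacle here; the only step demanding care is the factorization $\max_{i,j}a_{ik}b_{kj}=(\max_i a_{ik})(\max_j b_{kj})$, valid precisely because the matrices are entrywise nonnegative, and the conceptual key is recognizing that the hypothesis forces every row and column maximum of $A$ to be the constant $\alpha$, which makes the norm of the product independent of the order of multiplication.
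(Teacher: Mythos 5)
Your proof is correct and takes essentially the same approach as the paper, whose proof simply asserts that $\Vert A\otimes B\Vert=\alpha\max_{1\leq i,j\leq n}b_{ij}=\Vert B\otimes A\Vert$ without further detail. Your column-max/row-max factorization $\Vert M\otimes N\Vert=\max_k\bigl(\max_i m_{ik}\bigr)\bigl(\max_j n_{kj}\bigr)$ and the observation that every row and column maximum of $A$ equals $\alpha$ merely supply the justification the paper leaves implicit.
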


\begin{proof}
Let $A=\alpha U\oplus C,$ where $\alpha ,C$ satisfy the desired assumptions and  let $B=(b_{ij})\in M_n(\mathbb{R}_+)$ be given. Then it follows that 
\[
\Vert A\otimes B\Vert=\alpha \max_{1\leq i,j\leq n}b_{ij}=\Vert B\otimes A\Vert.
\]

\end{proof}

\begin{remark}
Considering the following example by S. Gaubert (\cite{Gaubert})
\[A=\begin{bmatrix}
1&1&1\\
1&0&0\\
1&0&0
\end{bmatrix},
\]
 an elementary computation implies that
 \[
\Vert A\otimes B\Vert = \Vert B\otimes A\Vert = \displaystyle\max_{1\leq i,j\leq 3}
b_{ij}
 \]
for all $3 \times 3$ nonnegative matrices $B$, but one cannot write $A = \alpha U \oplus C$, where $U\in\mathcal{U}_n$, $\alpha \ge 0$ and $\|C\|\le \alpha$,  because for all permutations $\sigma\in  S_3$,  $\displaystyle\prod_{i=1} ^3 a_{i\sigma (i)}=0$. This shows that the converse of the above proposition is not correct in general.
\end{remark}

\begin{remark}
 Let $x, y\in \mathbb{R}_{+}^{n}$ be such that $x^T \otimes y=0$ and $\|x\|=\|y\|=1$. If $A=x\otimes y^{t},  B=x\otimes x^{t},$  one can easily see
\[
A\otimes B=x\otimes y^{t}\otimes x \otimes x^{t}=O\in M_{n}(\mathbb{R}_{+})
\]
and
\[
B\otimes A=x\otimes x^{t}\otimes x \otimes y^{t}=x\otimes y^{t}=A.
\]
Also we have
\[
W_{max}(A\otimes B)=\{0\},~~~W_{max}(B\otimes A)=W_{max}(A)=[0,1]
\]
and 
 $\Vert A\otimes B\Vert=0,\ \   \Vert B\otimes A\Vert=1$.
\end{remark}

%
%
%
%
%

\begin{remark}
 Let $A=(a_{ij}),B\in M_n(\mathbb{R}_+)$ such that $A\otimes B=I=B\otimes A$. By  \cite[Theorem 1.1.3]{B} $A=P \otimes D$ for $P \in \mathcal{U}_n$ and a positive diagonal matrix $D$ and so
\[
\Vert A\Vert=\|D\| \;\;\mathrm{and}  \;\;\ \ \Vert B\Vert=\frac{1}{\displaystyle\min_{1\leq i,j\leq n, a_{ij}\neq 0} a_{ij}}.
\]
\end{remark}

\begin{proposition}
 Let  
\[
A=\begin{bmatrix}
I_{r} & O\\
O & A_{1}
\end{bmatrix},~~~B=\begin{bmatrix}
C & O \\
O &  B_{1} 
\end{bmatrix} \in M_{n}(\mathbb{R}_{+}),
\]
where  $A_{1},  B_{1}\in M_{n-r}(\mathbb{R}_{+})$ and $C\in M_{r}(\mathbb{R}_{+})$. If $c_{ii}=0$ for some    $i=1, \ldots, r$   and $\displaystyle\Vert A_{1}\otimes B_{1}\Vert=\displaystyle\Vert B_{1}\otimes A_{1}\Vert,$ then
\[
W_{max}(A\otimes B)=W_{max}(B\otimes A)=[0, \displaystyle\| (A_{1}\otimes B_{1})\|\oplus \|C\|].
\]
\end{proposition}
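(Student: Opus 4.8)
The plan is to reduce everything to a direct computation of the two max-algebra products together with an application of Theorem \ref{WmaxA}, which describes $W_{max}$ of any nonnegative matrix as the interval from its smallest diagonal entry to its largest entry.

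First I would compute $A\otimes B$ and $B\otimes A$ via block multiplication in the max algebra. Because the off-diagonal blocks of both $A$ and $B$ vanish, the same holds for their max products: for an index $i$ in the first block range and $j$ in the second, the identity $(A\otimes B)_{ij}=\max_k a_{ik}b_{kj}$ forces the summation index $k$ to lie simultaneously in the first block (so that $a_{ik}\neq 0$) and in the second block (so that $b_{kj}\neq 0$), which is impossible. Hence both products are block diagonal, and since $I_r$ is the max-algebra identity one obtains
\[
A\otimes B=\begin{bmatrix} C & O\\ O & A_1\otimes B_1\end{bmatrix}, \qquad B\otimes A=\begin{bmatrix} C & O\\ O & B_1\otimes A_1\end{bmatrix}.
\]

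Next I would apply Theorem \ref{WmaxA} to each product. For $A\otimes B$ the diagonal entries are the $c_{ii}$ together with the diagonal entries of $A_1\otimes B_1$; since the hypothesis guarantees $c_{ii}=0$ for some $i\le r$ and all entries are nonnegative, the smallest diagonal entry equals $0$. The largest entry of a block-diagonal matrix is the larger of the two block norms, namely $\|C\|\oplus\|A_1\otimes B_1\|$. Thus $W_{max}(A\otimes B)=[0,\|A_1\otimes B_1\|\oplus\|C\|]$. The identical argument applied to $B\otimes A$ yields lower endpoint $0$ (again from $c_{ii}=0$) and upper endpoint $\|C\|\oplus\|B_1\otimes A_1\|$; the hypothesis $\|A_1\otimes B_1\|=\|B_1\otimes A_1\|$ then forces the two upper endpoints to coincide, giving the common value claimed.

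The computation is routine, so there is no genuine obstacle; the only points deserving care are the justification that the max product of block-diagonal matrices is again block diagonal (which uses the vanishing off-diagonal blocks exactly as above) and the recognition that the two hypotheses play complementary roles: the condition $c_{ii}=0$ pins the left endpoint at $0$, while the norm equality $\|A_1\otimes B_1\|=\|B_1\otimes A_1\|$ is precisely what makes the right endpoints of $W_{max}(A\otimes B)$ and $W_{max}(B\otimes A)$ agree.
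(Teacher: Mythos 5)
Your proposal is correct and follows essentially the same route as the paper's own proof: compute the block-diagonal products $A\otimes B$ and $B\otimes A$, then invoke Theorem \ref{WmaxA}, with $c_{ii}=0$ fixing the left endpoint at $0$ and the hypothesis $\Vert A_{1}\otimes B_{1}\Vert=\Vert B_{1}\otimes A_{1}\Vert$ matching the right endpoints. Your write-up is in fact slightly more detailed than the paper's, since you justify explicitly why the off-diagonal blocks of the products vanish, a step the paper leaves as a routine computation.
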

\begin{proof}
By computing $A\otimes B$ and $B\otimes A,$  one has
\[
A\otimes B=\begin{bmatrix}
C & O\\
O & A_{1}\otimes B_{1}
\end{bmatrix},~ B\otimes A=\begin{bmatrix}
C & O\\
O & B_{1}\otimes A_{1}
\end{bmatrix}.
\]
Since $c_{ii}=0$ for some    $i=1, \ldots, r$  and $\displaystyle\Vert A_{1}\otimes B_{1}\Vert=\displaystyle\Vert B_{1}\otimes A_{1}\Vert$ the result follows from Theorem  \ref{WmaxA}. 
\end{proof}
The following proposition is proved similarly by Theorem \ref{th1}.
\begin{proposition}
 Let  
\[
A=\begin{bmatrix}
I_{r} & O\\
O & A_{1}
\end{bmatrix},~~~B=\begin{bmatrix}
C & O \\
O &  B_{1} 
\end{bmatrix} \in M_{n}(\mathbb{R}_{+}),
\]
where  $A_{1},  B_{1}\in M_{n-r}(\mathbb{R}_{+})$ and $C\in M_{r}(\mathbb{R}_{+})$. If $c_{ii}=0$ for some $i=i_{1}, i_{2}, \ldots, i_{s},~ 1\leq s \leq r$,     then
\[
W_{max}^{k}(A\otimes B)=\begin{cases}
 [0,  \Vert (A_{1}\otimes B_{1})\Vert \oplus \Vert C\Vert] &  1\leq k\leq s \\ 
  [c, \Vert (A_{1}\otimes B_{1})\Vert \oplus \Vert C\Vert]  & s+1\leq k  < n
\end{cases}, 
\]
where  $c= \displaystyle\min_{1\leq i_1<\dots <i_k\leq n} \displaystyle\bigoplus_{j=1}^{k}( (A \otimes B)_{i_{j}i_{j}}$, and similarly
\[
W_{max}^{k}(B\otimes A)=\begin{cases}
 [0,  \Vert (B_{1}\otimes A_{1})\Vert \oplus \Vert C\Vert] &  1\leq k\leq s \\ 
  [d, \Vert (B_{1}\otimes A_{1})\Vert \oplus  \Vert C\Vert]  & s+1\leq k < n
\end{cases}, 
\]
where  $d= \displaystyle\min_{1\leq i_1<\dots <i_k\leq n} \displaystyle\bigoplus_{j=1}^{k}( B \otimes A)_{i_{j}i_{j}}$. 

 If   $\displaystyle\Vert A_{1}\otimes B_{1}\Vert=\displaystyle\Vert B_{1}\otimes A_{1}\Vert,$ then
\[
W_{max}^{k}(A\otimes B)=W_{max}^{k}(B\otimes A),     1\leq k \leq s, \; 1 \le k <n.
\]
\end{proposition}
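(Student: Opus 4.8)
The plan is to follow the template of the preceding proposition verbatim, replacing the appeal to Theorem \ref{WmaxA} (the case $k=1$) by an appeal to Theorem \ref{th1} (general $k$). The whole argument reduces to a block computation followed by reading off the two endpoints prescribed by Theorem \ref{th1}.

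First I would compute the two max products in block form. Since the off-diagonal blocks of each factor are zero (the additive identity $0$ in max algebra) and $I_r$ is the multiplicative identity, so that $I_r\otimes C=C=C\otimes I_r$, one obtains the block-diagonal matrices
\[
A\otimes B=\begin{bmatrix} C & O\\ O & A_1\otimes B_1\end{bmatrix},\qquad
B\otimes A=\begin{bmatrix} C & O\\ O & B_1\otimes A_1\end{bmatrix}.
\]
This is the same routine step as in the previous proposition and needs no further elaboration.

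Next I apply Theorem \ref{th1} to $A\otimes B$. The upper endpoint $d$ is the largest entry, and for a block-diagonal matrix this is $\max\{\|C\|,\|A_1\otimes B_1\|\}=\|C\|\oplus\|A_1\otimes B_1\|$, which is the common right-hand endpoint in both cases of the stated formula. For the lower endpoint I examine the diagonal of $A\otimes B$: its first $r$ entries are $c_{11},\dots,c_{rr}$, of which precisely the $s$ positions $i_1,\dots,i_s$ vanish, and its last $n-r$ entries are the diagonal of $A_1\otimes B_1$. The case split is then forced by the formula $c=\min_{1\le i_1<\cdots<i_k\le n}\bigoplus_{j=1}^k(A\otimes B)_{i_ji_j}$ from Theorem \ref{th1}: when $1\le k\le s$ I may select $k$ indices among the $s$ zero positions, so the selected $\oplus$ is $0$ and hence $c=0$; when $s+1\le k<n$ every admissible size-$k$ selection must contain at least $k-s\ge 1$ positions carrying a positive diagonal entry, so $c$ equals the positive minimum recorded in the statement. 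Repeating the argument for $B\otimes A$ yields the twin formula with $\|B_1\otimes A_1\|$ and the minimum $d$ taken over the diagonal of $B\otimes A$.

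For the final equality under the hypothesis $\|A_1\otimes B_1\|=\|B_1\otimes A_1\|$, the two upper endpoints coincide, and for $1\le k\le s$ both lower endpoints equal $0$; hence $W_{\max}^k(A\otimes B)=W_{\max}^k(B\otimes A)$ on that range. The one point I would flag explicitly—and the reason the equality is asserted only for $1\le k\le s$—is that for $k>s$ the lower endpoints $c$ and $d$ are extracted from the diagonals of $A_1\otimes B_1$ and $B_1\otimes A_1$, and these diagonals need not agree even when their maxima do, so the two ranges may genuinely differ there. Thus the only mild subtlety is recognizing that matching the maxima of $A_1\otimes B_1$ and $B_1\otimes A_1$ controls the right endpoints but not the left ones, which confines the coincidence to the zero-lower-bound regime $1\le k\le s$.
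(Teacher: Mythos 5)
Your proposal is correct and is exactly the paper's intended argument: the paper disposes of this proposition in one line (``proved similarly by Theorem~\ref{th1}''), meaning the same block computation of $A\otimes B$ and $B\otimes A$ as in the preceding proposition, followed by reading off the endpoints from Theorem~\ref{th1}, which is precisely what you do, including the correct observation that the restriction of the final equality to $1\le k\le s$ comes from the lower endpoints being uncontrolled for $k>s$. One microscopic overreach: for $s+1\le k<n$ you assert the minimum $c$ is positive, but the hypothesis only lists $s$ zero diagonal entries of $C$ and says nothing about the diagonals of $A_1\otimes B_1$ (or the remaining diagonal of $C$), which may also vanish --- harmless here, since the second case of the statement is just Theorem~\ref{th1}'s formula verbatim and holds whether or not $c>0$.
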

\begin{example}
 Let   
\[
A=\begin{bmatrix}
I_{2} & O\\
O & A_{1}
\end{bmatrix},~~~B=\begin{bmatrix}
0 & 2 & 0 & \cdots & 0\\
0 & 0 & 0 & \cdots & 0\\
 & O & & B_{1} & 
\end{bmatrix} \in M_{n}(\mathbb{R}_{+})
\]
for some  $A_{1},  B_{1}\in M_{n-2}(\mathbb{R}_{+})$ with $\Vert A_{1}\Vert\leq 1$ and $\Vert B_{1}\Vert \leq 1$ \cite{CGLTW}. So one has
\[
A\otimes B=\begin{bmatrix}
0 & 2 & 0 & \cdots & 0\\
0 & 0 & 0 & \cdots & 0\\
 & O & & A_{1}\otimes B_{1} & 
\end{bmatrix},~~ B\otimes A=\begin{bmatrix}
0 & 2 & 0 & \cdots & 0\\
0 & 0 & 0 & \cdots & 0\\
 & O & & B_{1}\otimes A_{1} & 
\end{bmatrix}
\]
and hence
\[
W_{max}(A\otimes B)=W_{max}(B\otimes A)=[0, 2].
\]
\end{example}

\section{Joint $k-$numerical range}

 Let $\mathbb{A}=(A_{1}, \ldots, A_{m})$ be an $m-\rm{tuple}$ of $n \times n$  complex matrices. For $1\leq k \leq n,$ the max joint $k-$numerical range of $\mathbb{A}$ is defined as 
\[
W_{k}(\mathbb{A})=\{\displaystyle(tr(X^{*}A_{1}X), \ldots, tr(X^{*}A_{m}X)): X\in \widetilde{\mathcal{X}}_{n\times k} \}.
\]
When $k=1,$ it reduces to the usual joint numerical range of $\mathbb{A}$. It is known that $W_{1}(\mathbb{A})$ is compact and connected. However,  $W_{1}(\mathbb{A})$ is not always convex  if $m>3$   (e.g., see \cite{AYP1979}).   We are interested in studying the  joint  $k-$numerical range  in  the max algebra setting.
\begin{definition}\label{eq1}
Let $\mathbb{A}=(A_1,\dots ,A_m)$, where $A_i\in M_n(\mathbb{R}_+),$  $i=1,\dots ,m$ and  let $1\leq k\leq n$ be a positive integer. The max joint $k$-numerical range of $\mathbb{A}$ is defined and denoted by 
\begin{eqnarray*}
W_{\max}^k(\mathbb{A})&=&\{(tr_{\otimes}(X^t\otimes A_1\otimes X),\dots ,tr_{\otimes}(X^t\otimes A_m\otimes X)):\ X\in\mathcal{X}_{n\times k}\}\\
&=&\{(\displaystyle\oplus_{i=1}^kx_i^t\otimes A_1\otimes x_i,\dots ,\oplus_{i=1}^kx_i^t\otimes A_m\otimes x_i):\ X=[x_1,\dots ,x_k]\in\\
&&\mathcal{X}_{n\times k}\}.
\end{eqnarray*}
\end{definition}
It is clear that $W_{\max}^1(\mathbb{A})=W_{\max}(\mathbb{A})$. So, the notion of max joint $k$-numerical range is a generalization of the max joint numerical range (\cite{TZPA, Erratum}).
\begin{example}\label{ex1}
Let $\mathbb{A}=(A_1,A_2)$, where $A_1=diag(2,4,5),A_2=diag(7,3,5)$. Then $W_{\max}(\mathbb{A})$ and $W_{\max}^2(\mathbb{A})$ are  displayed in Figure \ref{fig1}. It follows $W_{\max}^3(\mathbb{A})=\{(5,7)\}$.
\end{example}

\begin{figure}[!ht]
\begin{center}
\subfloat[$W_{\max}(\mathbb{A})$]{\label{figd1}\includegraphics[scale=0.35]{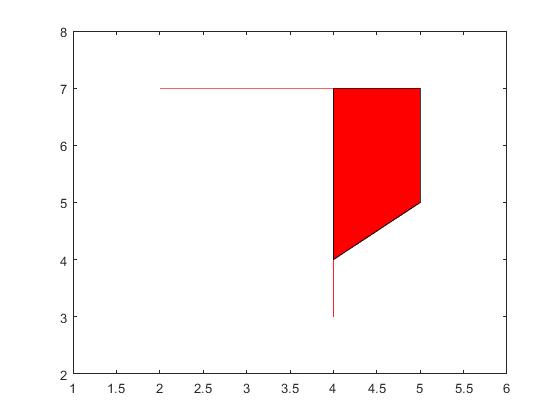}}
\subfloat[$W_{\max}^2(\mathbb{A})$]{\label{figd2}\includegraphics[scale=0.35]{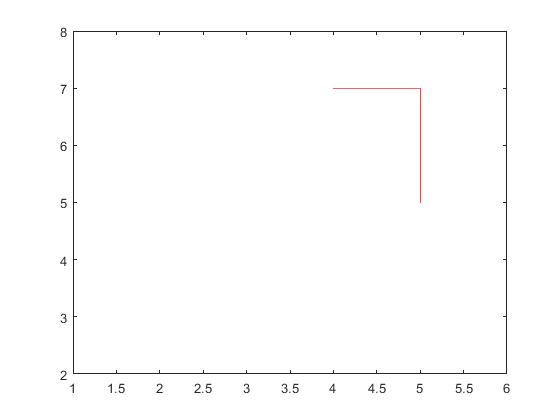}}
\caption{The max joint $k$-numerical range of  $\mathbb{A}=(A_{1}, A_{2}),   W_{\max}^{k}(\mathbb{A}), k=1, 2$  }
\label{fig1}
\end{center}
\end{figure}

The following result extends \cite[Proposition 3.9 and Corollary 3.10]{Erratum} to $m-\rm{tuples}$ of $n \times n$ nonnegative matrices.
\begin{proposition}\label{InequalityLipschitzcontinuousA2}
 Let $\mathbb{A}=(A_1,\dots ,A_m)$, where $A_i\in M_n(\mathbb{R}_+),$  $i=1,\dots ,m$ and let $1\leq k\leq n$ be a positive integer. Consider the map  $f_{\mathbb{A}}:\mathcal{X}_{n\times k}\rightarrow\mathbb{R}_+^m,$ where
\begin{eqnarray*}
 f_{\mathbb{A}}(X)=\displaystyle(tr_{\otimes}(X^t\otimes A_1\otimes X),\dots ,tr_{\otimes}(X^t\otimes A_m\otimes X)).
\end{eqnarray*}
Then
\begin{eqnarray}
\nonumber
\|  f_{\mathbb{A}}(X) - f_{\mathbb{A}}(Y) \| 
 \leq   \Vert \bigoplus_{i=1}^{m}A_{i} \Vert (\|X\| + \|Y\|) \|X-Y\|
 \label{good_L}
\end{eqnarray}
for all $X, Y \in \mathcal{X}_{n\times k}$.
\end{proposition}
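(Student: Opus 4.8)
The plan is to reduce the estimate to a single scalar quantity of the form $x^t\otimes A\otimes x$ and then reassemble. Write $X=[x_1,\dots ,x_k]$ and $Y=[y_1,\dots ,y_k]$. Since the norm on $\mathbb{R}_+^m$ is $\|v\|=\max_i|v_i|$, the left-hand side equals
\[
\max_{1\leq i\leq m}\bigl|tr_{\otimes}(X^t\otimes A_i\otimes X)-tr_{\otimes}(Y^t\otimes A_i\otimes Y)\bigr|,
\]
and by Definition \ref{eq1} each trace is itself a maximum, $tr_{\otimes}(X^t\otimes A_i\otimes X)=\bigoplus_{j=1}^k x_j^t\otimes A_i\otimes x_j$. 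The one tool I will use repeatedly is the elementary inequality $|\max_\alpha u_\alpha-\max_\alpha v_\alpha|\leq\max_\alpha|u_\alpha-v_\alpha|$, valid for any finite families of reals; it lets me pass the absolute value through both the outer maximum over $i$ and the inner maximum over $j$.

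The core estimate is the single-vector bound: for $A=(a_{pq})\in M_n(\mathbb{R}_+)$ and $x,y\in\mathbb{R}_+^n$,
\[
\bigl|x^t\otimes A\otimes x-y^t\otimes A\otimes y\bigr|\leq\|A\|(\|x\|+\|y\|)\|x-y\|.
\]
To prove this I again apply the $1$-Lipschitz property of the maximum to $x^t\otimes A\otimes x=\max_{p,q}a_{pq}x_px_q$, reducing matters to bounding $a_{pq}|x_px_q-y_py_q|$ for each pair $(p,q)$. The key algebraic step is the factorization
\[
x_px_q-y_py_q=x_p(x_q-y_q)+y_q(x_p-y_p),
\]
from which $|x_px_q-y_py_q|\leq x_p|x_q-y_q|+y_q|x_p-y_p|\leq(\|x\|+\|y\|)\|x-y\|$, and then $a_{pq}\leq\|A\|$ finishes the estimate.

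Finally I assemble the pieces. Applying the core estimate with $A=A_i$, $x=x_j$, $y=y_j$, and using $\|x_j\|\leq\|X\|$, $\|y_j\|\leq\|Y\|$, $\|x_j-y_j\|\leq\|X-Y\|$ (each column is dominated in norm by its matrix) together with $\|A_i\|\leq\|\bigoplus_{i=1}^m A_i\|$ (since $\oplus$ is the entrywise maximum), I obtain for every $i$ and $j$
\[
\bigl|x_j^t\otimes A_i\otimes x_j-y_j^t\otimes A_i\otimes y_j\bigr|\leq\Bigl\|\bigoplus_{i=1}^m A_i\Bigr\|(\|X\|+\|Y\|)\|X-Y\|.
\]
Because this bound is independent of $i$ and $j$, taking the maximum over $j$ inside each trace and then over $i$ yields the claimed inequality. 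I expect the only genuinely delicate point to be the single-vector estimate, specifically the add-and-subtract factorization above, while everything else is bookkeeping with the $1$-Lipschitz property of the maximum. I note in particular that the isometry constraint $X,Y\in\mathcal{X}_{n\times k}$ is never invoked, so the inequality in fact holds for arbitrary nonnegative matrices $X,Y$ of the appropriate size.
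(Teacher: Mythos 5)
Your proof is correct, and it follows the same outer skeleton as the paper's while being strictly more self-contained. The paper's proof is a two-line reduction: since the norm on $\mathbb{R}_+^m$ is the entrywise maximum, there is an index $i_0$ with $\Vert f_{\mathbb{A}}(X)-f_{\mathbb{A}}(Y)\Vert=\vert tr_{\otimes}(X^t\otimes A_{i_0}\otimes X)-tr_{\otimes}(Y^t\otimes A_{i_0}\otimes Y)\vert$; this is bounded by $\Vert A_{i_0}\Vert(\Vert X\Vert+\Vert Y\Vert)\Vert X-Y\Vert$ by \emph{citing} the single-matrix Lipschitz estimate \cite[Proposition 3.9]{Erratum}, and the proof finishes with $\Vert A_{i_0}\Vert\leq\Vert\bigoplus_{i=1}^m A_i\Vert$, exactly your assembly step. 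Where you differ is that you re-prove the cited lemma from scratch: the $1$-Lipschitz property of finite maxima applied to $x^t\otimes A\otimes x=\max_{p,q}a_{pq}x_px_q$ and to $tr_{\otimes}(X^t\otimes A\otimes X)=\bigoplus_{j=1}^k x_j^t\otimes A\otimes x_j$, combined with the factorization $x_px_q-y_py_q=x_p(x_q-y_q)+y_q(x_p-y_p)$ and column-by-column norm domination $\Vert x_j\Vert\leq\Vert X\Vert$, $\Vert x_j-y_j\Vert\leq\Vert X-Y\Vert$. All of these steps check out against the paper's definitions ($\Vert A\Vert=\max_{i,j}\vert a_{ij}\vert$, $\Vert x\Vert=\max_i\vert x_i\vert$). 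What your route buys is independence from the external reference and the explicit observation, absent from the paper, that the isometry constraint is never used, so the inequality holds for arbitrary $X,Y\in M_{n\times k}(\mathbb{R}_+)$ (for genuine isometries $\Vert X\Vert=\Vert Y\Vert=1$, so the middle factor is just $2$). What the paper's route buys is brevity: the proposition is framed precisely as the $m$-tuple extension of \cite[Proposition 3.9]{Erratum}, so invoking that result is the natural economy. One cosmetic remark: passing the absolute value through the outer maximum over $i$ needs no lemma, since $\Vert f_{\mathbb{A}}(X)-f_{\mathbb{A}}(Y)\Vert$ \emph{is} by definition $\max_{1\leq i\leq m}$ of the coordinate differences; the $1$-Lipschitz property is only genuinely needed for the inner maxima over $j$ and over the index pairs $(p,q)$.
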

\begin{proof} For each $X, Y\in \mathcal{X}_{n\times k}$  there exists $1\leq i_{0}\leq m$  such that
\begin{eqnarray*}
\Vert f_{\mathbb{A}}(X)-f_{\mathbb{A}}(Y) \Vert &=& \vert tr_{\otimes}(X^{t}\otimes A_{i_{0}}\otimes X)- tr_{\otimes}(Y^{t}\otimes A_{i_{0}}\otimes Y)\vert \\
&\leq& \Vert A_{i_0} \Vert (\|X\| + \|Y\|) \|X-Y\|\\
&\leq & \Vert \bigoplus_{i=1}^{m}A_{i} \Vert (\|X\| + \|Y\|) \|X-Y\|,
\end{eqnarray*}
where the first inequality follows from \cite[Proposition 3.9]{Erratum}.
\end{proof}
\begin{corollary} Let $\mathbb{A}=(A_1,\dots ,A_m)$, where $A_i\in M_n(\mathbb{R}_+),$  $i=1,\dots ,m$ and let $1\leq k\leq n$ be a positive integer. 
 For each $Z\in \mathcal{X}_{n\times k}$ and $X,Y\in \mathcal{X}_{n\times k}$ such that 
 $\|X-Z\|\le \frac{1}{2}$ and $\|Y-Z\|\le \frac{1}{2}$
  we have 
 \[
\|  f_{\mathbb{A}}(X)- f_{\mathbb{A}}(Y) \| \leq 
  \Vert \bigoplus_{i=1}^{m}A_{i} \Vert \displaystyle \left(
2\Vert Z\Vert+1
\right)\Vert X-Y\Vert.
 \] 
 Therefore the map $f_{\mathbb{A}}:\mathcal{X}_{n\times k}\rightarrow\mathbb{R}_+^m$  is locally Lipschitz continuous. 
\end{corollary}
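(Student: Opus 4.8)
The plan is to derive this directly from Proposition \ref{InequalityLipschitzcontinuousA2} by controlling the factor $\|X\| + \|Y\|$ appearing in that bound. First I would invoke the triangle inequality for the max norm $\|\cdot\|$ on $M_{n\times k}(\mathbb{R}_+)$: from $\|X - Z\| \le \frac{1}{2}$ one obtains $\|X\| \le \|Z\| + \frac{1}{2}$, and likewise $\|Y\| \le \|Z\| + \frac{1}{2}$ from $\|Y - Z\| \le \frac{1}{2}$. Adding these yields $\|X\| + \|Y\| \le 2\|Z\| + 1$.

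Next I would substitute this estimate into the Lipschitz-type bound of Proposition \ref{InequalityLipschitzcontinuousA2}, namely
\[
\|f_{\mathbb{A}}(X) - f_{\mathbb{A}}(Y)\| \le \Vert \bigoplus_{i=1}^{m} A_i \Vert (\|X\| + \|Y\|) \|X - Y\|,
\]
to conclude immediately that
\[
\|f_{\mathbb{A}}(X) - f_{\mathbb{A}}(Y)\| \le \Vert \bigoplus_{i=1}^{m} A_i \Vert (2\|Z\| + 1) \|X - Y\|,
\]
which is the desired inequality.

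Finally, for the local Lipschitz assertion, I would observe that $Z \in \mathcal{X}_{n\times k}$ was arbitrary: the set $\{W \in \mathcal{X}_{n\times k} : \|W - Z\| \le \frac{1}{2}\}$ is a neighborhood of $Z$ in $\mathcal{X}_{n\times k}$ on which $f_{\mathbb{A}}$ satisfies a Lipschitz condition with constant $\Vert \bigoplus_{i=1}^{m} A_i \Vert (2\|Z\| + 1)$. Since such a neighborhood exists about every point of $\mathcal{X}_{n\times k}$, the map $f_{\mathbb{A}}$ is locally Lipschitz continuous.

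There is essentially no serious obstacle here, as the statement is a routine corollary of the preceding proposition. The only point requiring minimal care is that the Lipschitz constant in the local bound depends on the base point $Z$ through $\|Z\|$; this is entirely consistent with \emph{local} (as opposed to global) Lipschitz continuity, where the constant is permitted to vary from one neighborhood to another.
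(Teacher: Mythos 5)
Your proposal is correct and is exactly the argument the paper intends: the corollary is stated as an immediate consequence of Proposition \ref{InequalityLipschitzcontinuousA2}, obtained by bounding $\|X\|+\|Y\|\le 2\|Z\|+1$ via the triangle inequality for the norm $\|A\|=\max_{i,j}|a_{ij}|$ and substituting into the proposition's estimate. One could even note that every $Z\in\mathcal{X}_{n\times k}$ satisfies $\|Z\|=1$ (since $Z^{t}\otimes Z=I_{k}$ forces each column to have maximal entry $1$), so the constant is uniformly $3\Vert\bigoplus_{i=1}^{m}A_{i}\Vert$, but your point-dependent constant is entirely consistent with the local Lipschitz claim as stated.
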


%
%
 \begin{remark}
 Note that $W_{\max}^k(\mathbb{A})$ is the image of  continuous mapping $f_{\mathbb{A}}$.  Using  compactness of  $\mathcal{X}_{n\times k}$ (\cite[Remark 3.5]{Erratum}), $W_{\max}^k(\mathbb{A})$ is a   compact set. However, we are unsure of the connectedness of  $W_{\max}^k(\mathbb{A})$, which we pose as an open question (see also \cite[Question 3.2]{Erratum})).
\end{remark}

\begin{proposition}\label{cor1}

Let $\mathbb{A}=(A_1,\dots ,A_m),$ where  $A_l=(a_{ij}^{(l)})\in M_n(\mathbb{R}_+),\ l=1,\dots , m$ and let $1\leq k<n$ be a positive integer. Then 
\begin{equation}
W_{\max}^k(\mathbb{A})\subseteq [s_1,t_1]\times\dots\times [s_m,t_m]\subseteq\mathbb{R}_+^m,
\label{incl1}
\end{equation}
where $s_l=\min\{\displaystyle\oplus_{j=1}^ka_{i_ji_j}^{(l)}:\ 1\leq i_1<i_2<\dots <i_k\leq n\}, t_l=\displaystyle\max_{1\leq i,j\leq n}a_{ij}^{(l)}$ for all $l=1,\dots ,m$. Moreover, when $k=n$, we have
\begin{equation}
W_{\max}^n(\mathbb{A})=\{(tr_{\otimes}(A_1),\dots ,tr_{\otimes}(A_m))\}.
\label{eq_n}
\end{equation}
\end{proposition}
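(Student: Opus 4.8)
The plan is to prove the two assertions separately, treating the inclusion \eqref{incl1} first and the exact equality \eqref{eq_n} second. For the inclusion, I would observe that each coordinate of $W_{\max}^k(\mathbb{A})$ is governed by a single matrix $A_l$. More precisely, fix an arbitrary point $(\mu_1,\dots,\mu_m)\in W_{\max}^k(\mathbb{A})$; by Definition \ref{eq1} there is some $X\in\mathcal{X}_{n\times k}$ with $\mu_l=tr_{\otimes}(X^t\otimes A_l\otimes X)$ for every $l$. The key observation is that this same $X$ witnesses $\mu_l\in W_{\max}^k(A_l)$ for each individual $l$, since $W_{\max}^k(A_l)$ is exactly the set of values $tr_{\otimes}(X^t\otimes A_l\otimes X)$ as $X$ ranges over $\mathcal{X}_{n\times k}$. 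Therefore $\mu_l\in W_{\max}^k(A_l)$ for all $l$, which gives the coordinatewise containment
\[
W_{\max}^k(\mathbb{A})\subseteq W_{\max}^k(A_1)\times\cdots\times W_{\max}^k(A_m).
\]

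With that reduction in hand, I would invoke Theorem \ref{th1} directly: since $1\leq k<n$, each factor satisfies $W_{\max}^k(A_l)=[s_l,t_l]$ with exactly the $s_l$ and $t_l$ stated in the proposition, namely $s_l=\min\{\oplus_{j=1}^k a_{i_ji_j}^{(l)}:1\leq i_1<\cdots<i_k\leq n\}$ and $t_l=\max_{1\leq i,j\leq n}a_{ij}^{(l)}$. Substituting these intervals into the product gives \eqref{incl1} immediately. I should emphasize that this is only an inclusion, not an equality, because a single $X$ must simultaneously realize all $m$ coordinates, whereas the product of intervals allows each coordinate to be chosen independently; this is precisely why the statement is phrased as $\subseteq$ rather than $=$.

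For the second assertion \eqref{eq_n}, I would use the characterization $\mathcal{U}_n=\mathcal{X}_{n\times n}$ as the group of permutation matrices noted in the introduction. When $k=n$, any $X\in\mathcal{X}_{n\times n}$ equals a permutation matrix, so $X=[e_{\sigma(1)},\dots,e_{\sigma(n)}]$ for some $\sigma\in S_n$. Then $x_i^t\otimes A_l\otimes x_i=a^{(l)}_{\sigma(i)\sigma(i)}$, so that
\[
tr_{\otimes}(X^t\otimes A_l\otimes X)=\bigoplus_{i=1}^n a^{(l)}_{\sigma(i)\sigma(i)}=\max_{1\leq i\leq n}a^{(l)}_{ii}=tr_{\otimes}(A_l),
\]
because permuting the diagonal entries does not change their maximum. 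Since this holds for every $\sigma$ and every $l$, the set $W_{\max}^n(\mathbb{A})$ collapses to the single point $(tr_{\otimes}(A_1),\dots,tr_{\otimes}(A_m))$, which is \eqref{eq_n}. One could alternatively cite \eqref{WmaxNnumberone} from Theorem \ref{th1} coordinatewise, but the direct permutation argument is cleaner and self-contained.

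I do not anticipate a genuine obstacle here, as both parts follow quickly from Theorem \ref{th1} once the coordinatewise reduction is made. The only subtle point worth stating carefully is the distinction between the product-of-intervals upper bound and true equality: the main conceptual content is recognizing that each coordinate is controlled by its own matrix while the witness $X$ is shared, so the containment cannot in general be reversed. I would make sure to phrase the reduction step explicitly so that the reader sees why only an inclusion is claimed in \eqref{incl1}, and I would point out that the endpoints $s_l,t_l$ are achieved individually but need not be achievable jointly by a common $X$.
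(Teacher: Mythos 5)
Your proposal is correct and follows essentially the same route as the paper: the coordinatewise reduction via a shared witness $X\in\mathcal{X}_{n\times k}$ followed by an application of Theorem \ref{th1} for \eqref{incl1}, and the invariance $tr_{\otimes}(X^t\otimes A_l\otimes X)=tr_{\otimes}(A_l)$ for $X\in\mathcal{U}_n$ for \eqref{eq_n}. The only difference is cosmetic: the paper merely asserts that trace invariance, while you justify it explicitly through the permutation-matrix characterization of $\mathcal{U}_n$.
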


\begin{proof}
Let $\lambda =(\lambda_1,\dots ,\lambda_m)\in W_{\max}^k(\mathbb{A})$ for some $1\leq k<n$. Then there exists $X\in \mathcal{X}_{n\times k}$ such that $\lambda =(tr_{\otimes}(X^t\otimes A_1\otimes X),\dots ,tr_{\otimes}(X^t\otimes A_m\otimes X))$. Therefore, $\lambda_i=tr_{\otimes}(X^t\otimes A_i\otimes X)$ for $i=1,\dots ,m$. So, $\lambda_i\in W_{\max}^k(A_i)$ for $i=1,\dots ,m$. By Theorem \ref{th1}, $\lambda_i\in [s_i,t_i]$ for $i=1,\dots ,m$. Thus
\begin{equation*}
\lambda =(\lambda_1,\dots ,\lambda_m)\in [s_1,t_1]\times\dots \times [s_m,t_m],
\end{equation*}
which establishes (\ref{incl1}).

To prove (\ref{eq_n}) observe that $tr_{\otimes}(X^t\otimes A_i\otimes X)=tr_{\otimes}(A_i)$ for each $X\in\mathcal{U}_n$ and each $i=1,\dots , m$. So we have 
\begin{eqnarray*}
W_{\max}^n(\mathbb{A})&=&\{(tr_{\otimes}(X^t\otimes A_1\otimes X),\dots ,tr_{\otimes}(X^t\otimes A_m\otimes X)):X\in\mathcal{U}_n\}\\
&=&\{(tr_{\otimes}(A_1),\dots ,tr_{\otimes}(A_m))\}.
\end{eqnarray*}
Hence, the proof is complete.
\end{proof}

In the following theorem, we state some basic properties of the max joint $k$-numerical range of matrices.
\begin{theorem}\label{TheoremJointK}
Let $\mathbb{A}=(A_1,\dots ,A_m)$, where $A_i\in M_n(\mathbb{R}_+)$ for all $i=1,\dots ,m$, and let $1\leq k\leq n$ be a positive integer. Then the following assertions are true:
\begin{itemize}
\item[(i)] If $A_1=\dots =A_m=(a_{ij})$. If $1\leq k<n$ be a positive integer, then $W_{\max}^k(\mathbb{A})=\{(a,\dots ,a):\ s\leq a\leq t\}$, where $s=\min\{\displaystyle\oplus_{j=1}^ka_{i_ji_j}:\ 1\leq i_1<i_2<\dots <i_k\leq n\}$ and $t=\displaystyle\max_{1\leq i,j\leq n}a_{ij}$. If $k=n$, then $W_{\max}^n(\mathbb{A})=\{tr_{\otimes}(A),\dots ,tr_{\otimes}(A)\}$;

\item[(ii)] $W_{\max}^k(A_1\oplus\alpha_1I,\dots ,A_m\oplus\alpha_mI)=W_{\max}^k(A_1,\dots ,A_m)\oplus \{(\alpha_1,\dots ,\alpha_m)\}$ and $W_{\max}^k(\mathbb{A}\oplus \mathbb{B})\subseteq W_{\max}^k(\mathbb{A})\oplus W_{\max}^k(\mathbb{B})$, where $\alpha_1,\dots ,\alpha_m\in\mathbb{R}_+$ and $\mathbb{B}=(B_1,\dots ,B_m)$ with $B_i\in M_n(\mathbb{R}_+)$ for all $i=1,\dots ,m$;
\item[(iii)] $W_{\max}^k(U^t\otimes A_1\otimes U,\dots ,U^t\otimes A_m\otimes U)=W_{\max}^k(A_1,\dots ,A_m)$, where $U\in\mathcal{U}_n$;
\item[(iv)] $W_{\max}^n(\alpha_1A_1,\dots ,\alpha_mA_m)=\{(\alpha_1tr_{\otimes}(A_1),\dots ,\alpha_mtr_{\otimes}(A_m))\}$, where $\alpha_1,\dots ,\\ \alpha_m\in\mathbb{R}_+$;
\item[(v)] If $B_1,\dots ,B_m\in M_l(\mathbb{R}_+)$ are formed by considering rows and columns $i_1,\dots ,i_l$ from $A_1,\dots ,A_m$, respectively and $k\leq l$, then $W_{\max}^k(\mathbb{B})\subseteq W_{\max}^k(\mathbb{A})$, where $\mathbb{B}=(B_1,\dots ,B_m)$. Consequently, if $V=[e_{i_1},\dots ,e_{i_s}]\in M_{n\times s}(\mathbb{R}_+),\\ 1\leq k \leq s\leq n,$ then $W_{\max}^k(V^t\otimes A_1\otimes V,\dots ,V^t\otimes A_m\otimes V)\subseteq W_{\max}^k(A_1,\dots ,A_m)$ and the equality holds if $s=n$;
\item[(vi)] Let $B_1,\dots ,B_m\in M_{p}(\mathbb{R}_+)$ and let  $D_i=\left[\begin{array}{cc}
A_i & 0\\
0 & B_i
\end{array}\right]$. Moreover, let $1\leq k\leq\min\{n, p\}$ be a positive integer. Then $W_{\max}^k(\mathbb{A})\cup W_{\max}^k(\mathbb{B})\subseteq W_{\max}^k(\mathbb{D})$, where $\mathbb{B}=(B_1,\dots ,B_m)$ and $\mathbb{D}=(D_1,\dots ,D_m)$;
\item[(vii)] If $m-1<k<n $, then $W_{\max}^{k+1}(\mathbb{A})\subseteq W_{\max}^k(\mathbb{A})$.
\end{itemize}
\end{theorem}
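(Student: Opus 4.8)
The plan is to dispatch (i)--(vi) by reducing each assertion to the single-matrix results already in hand (Theorem \ref{th1} and equation (\ref{eq_n})), applied coordinate by coordinate, and to treat (vii) as the only genuinely new combinatorial step. Throughout I will lean on three elementary facts about the max operations: that $\otimes$ distributes over $\oplus$; that $tr_\otimes(M\oplus N)=tr_\otimes(M)\oplus tr_\otimes(N)$, since the maximum of the diagonal of $M\oplus N$ splits as the max of the two diagonals; and that for $X\in\mathcal{X}_{n\times k}$ one has $X^t\otimes(\alpha I_n)\otimes X=\alpha I_k$ because $X^t\otimes X=I_k$.

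For (i), since $A_1=\dots=A_m=A$, every coordinate of $f_{\mathbb{A}}(X)$ equals $tr_\otimes(X^t\otimes A\otimes X)$, so the image is the diagonal set $\{(a,\dots,a):a\in W_{\max}^k(A)\}$; Theorem \ref{th1} supplies the endpoints $s,t$ for $k<n$, and (\ref{eq_n}) handles $k=n$. For (ii), distributivity gives $X^t\otimes(A_i\oplus\alpha_iI)\otimes X=(X^t\otimes A_i\otimes X)\oplus\alpha_iI_k$, whence $tr_\otimes$ of the left side is $tr_\otimes(X^t\otimes A_i\otimes X)\oplus\alpha_i$; this is a bijective shift of the whole range, proving the equality, while the direct-sum statement is only an inclusion because $f_{\mathbb{A}\oplus\mathbb{B}}(X)=f_{\mathbb{A}}(X)\oplus f_{\mathbb{B}}(X)$ uses one common isometry $X$ whereas the two target ranges may each require a different one. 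For (iii), the map $X\mapsto U\otimes X$ is a bijection of $\mathcal{X}_{n\times k}$ (as $U$ is a permutation matrix with $U^t\otimes U=I_n$) and $tr_\otimes(X^t\otimes U^t\otimes A_i\otimes U\otimes X)=tr_\otimes((U\otimes X)^t\otimes A_i\otimes(U\otimes X))$, so the two ranges coincide. Statement (iv) is just (\ref{eq_n}) applied to $(\alpha_1A_1,\dots,\alpha_mA_m)$ together with $tr_\otimes(\alpha_iA_i)=\alpha_i\,tr_\otimes(A_i)$.

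For (v) I will reuse the embedding from the proof of Proposition \ref{pro5}(v): given $X\in\mathcal{X}_{l\times k}$ realizing a point of $W_{\max}^k(\mathbb{B})$, form $Y\in\mathcal{X}_{n\times k}$ by inserting the rows of $X$ into positions $i_1,\dots,i_l$ and setting the rest to zero; then $Y^t\otimes A_i\otimes Y=X^t\otimes B_i\otimes X$ \emph{simultaneously} for every $i$, so $f_{\mathbb{A}}(Y)=f_{\mathbb{B}}(X)$. The consequence for $V=[e_{i_1},\dots,e_{i_s}]$ follows because $V^t\otimes A_i\otimes V$ is precisely the principal submatrix on those indices, with equality at $s=n$ coming from (iii). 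Statement (vi) is then immediate from (v), since each $A_i$ and each $B_i$ is a principal submatrix of $D_i$.

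The one substantive step is (vii), and the key is a pigeonhole count. Given $X=[x_1,\dots,x_{k+1}]\in\mathcal{X}_{n\times(k+1)}$, set $\beta_i^{(l)}=x_i^t\otimes A_l\otimes x_i$, so that the $l$-th coordinate of $f_{\mathbb{A}}(X)$ is $\bigoplus_{i=1}^{k+1}\beta_i^{(l)}$. Call column $i$ \emph{essential} for coordinate $l$ if it is the \emph{unique} index attaining that maximum; deleting any column that is non-essential for all $l$ leaves every coordinate unchanged. Since each coordinate has at most one essential column, at most $m$ of the $k+1$ columns are essential for some coordinate, and the hypothesis $m-1<k$, i.e. $k+1\ge m+1$, forces a column $i_0$ essential for no coordinate. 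Deleting it gives $Y=[x_i:i\neq i_0]\in\mathcal{X}_{n\times k}$, with orthonormality inherited and $f_{\mathbb{A}}(Y)=f_{\mathbb{A}}(X)$, yielding the inclusion. I expect the only delicate point to be phrasing the essential-column count so that shared maxima are handled correctly: a column attaining a maximum that another column also attains is safe to delete, which is exactly why the bound is $m$ and not larger, and why the threshold $m-1<k$ is sharp.
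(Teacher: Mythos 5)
Your proposal is correct and follows essentially the same route as the paper's proof: items (i)--(vi) use the same coordinatewise reductions to Theorem \ref{th1} and equation (\ref{eq_n}), the same substitution $Y=U\otimes X$ for (iii), and the same zero-padding embedding from Proposition \ref{pro5}(v) for (v), with (vi) deduced from (v). For (vii) the paper simply asserts from $k+1>m$ that some column $x_s$ satisfies $x_s^t\otimes A_j\otimes x_s\leq \oplus_{i\neq s}x_i^t\otimes A_j\otimes x_i$ for all $j$; your essential-column pigeonhole count is precisely the justification the paper leaves implicit, and it correctly handles tied maxima.
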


\begin{proof}
To prove (i), let  $A_1=\dots =A_m=(a_{ij})\in M_n(\mathbb{R}_+)$ be given and let $1\leq k<n$ be a positive integer. Then by Definition \ref{eq1} and Theorem \ref{th1}, we have
\begin{eqnarray*}
W_{\max}^k(\mathbb{A})&=&\{(tr_{\otimes}(X^t\otimes A\otimes X),\dots ,tr_{\otimes}(X^t\otimes A\otimes X)):X\in\mathcal{X}_{n\times k}\}\\
&=&\{(a,\dots ,a):s\leq a\leq t\}.
\end{eqnarray*}
The case  $k=n$ follows from (\ref{eq_n}).


For (ii), let $\alpha_1,\dots ,\alpha_m\in\mathbb{R}_+$ be given. Then
\begin{eqnarray*}
&&W_{\max}^k(A_1\oplus\alpha_1I,\dots ,A_m\oplus\alpha_mI)\\
&=&\{(tr_{\otimes}(X^t\otimes (A_1\oplus\alpha_1I)\otimes X),\dots ,tr_{\otimes}(X^t\otimes (A_m\oplus\alpha_mI)\otimes X):X\in\mathcal{X}_{n\times k}\}\\
&=&\{(tr_{\otimes}(X^t\otimes A_1\otimes X)\oplus tr_{\otimes}(X^t\otimes\alpha_1I\otimes X),\dots ,\\
&&tr_{\otimes}(X^t\otimes A_m\otimes X)\oplus tr_{\otimes}(X^t\otimes\alpha_mI\otimes X)):X\in\mathcal{X}_{n\times k}\}\\
&=&\{(tr_{\otimes}(X^t\otimes A_1\otimes X),\dots ,tr_{\otimes}(X^t\otimes A_m\otimes X)):X\in\mathcal{X}_{n\times k}\}\oplus\{(\alpha_1,\dots ,\alpha_m)\}\\
&=&W_{\max}^k(A_1,\dots ,A_m)\oplus \{(\alpha_1,\dots ,\alpha_m)\}.
\end{eqnarray*}

For the second assertion, let $\mathbb{B}=(B_1,\dots ,B_m)$, where $B_1,\dots ,B_m\in M_n(\mathbb{R}_+)$. Then by Definition \ref{eq1}, we have
\begin{eqnarray*}
&&W_{\max}^k(\mathbb{A}\oplus \mathbb{B})\\
&=&\{(tr_{\otimes}(X^t\otimes (A_1\oplus B_1)\otimes X),\dots ,tr_{\otimes}(X^t\otimes (A_m\oplus B_m)\otimes X)):X\in\mathcal{X}_{n\times k}\}\\
&=&\{(tr_{\otimes}(X^t\otimes A_1\otimes X),\dots ,tr_{\otimes}(X^t\otimes A_m\otimes X))\\
&&\oplus (tr_{\otimes}(X^t\otimes B_1\otimes X),\dots ,tr_{\otimes}(X^t\otimes B_m\otimes X)):X\in\mathcal{X}_{n\times k}\}\\
&\subseteq&\{(tr_{\otimes}(X^t\otimes A_1\otimes X),\dots ,tr_{\otimes}(X^t\otimes A_m\otimes X)):X\in\mathcal{X}_{n\times k}\}\\
&&\oplus\{(tr_{\otimes}(X^t\otimes B_1\otimes X),\dots ,tr_{\otimes}(X^t\otimes B_m\otimes X)):X\in\mathcal{X}_{n\times k}\}\\
&=&W_{\max}^k(\mathbb{A})\oplus W_{\max}^k(\mathbb{B}).
\end{eqnarray*}

For (iii), let $U\in\mathcal{U}_n$ be given. By putting $Y=U\otimes X$, where $X\in\mathcal{X}_{n\times k}$, we have $Y\in\mathcal{X}_{n\times k}$ and
\begin{eqnarray*}
&&W_{\max}^k(U^t\otimes A_1\otimes U,\dots ,U^t\otimes A_m\otimes U)\\
&=&\{(tr_{\otimes}(X^t\otimes U^t\otimes A_1\otimes U\otimes X),\dots ,tr_{\otimes}(X^t\otimes U^t\otimes A_m\otimes U\otimes X)):X\in\mathcal{X}_{n\times k}\}\\
&=&\{(tr_{\otimes}(Y^t\otimes A_1\otimes Y),\dots ,tr_{\otimes}(Y^t\otimes A_m\otimes Y)):Y\in\mathcal{X}_{n\times k}\}\\
&=&W_{\max}(A_1,\dots ,A_m).
\end{eqnarray*}

Property (iv) follows from  (\ref{eq_n}). 


To prove (v), let $(\lambda_1,\dots ,\lambda_m)\in W_{\max}^k(\mathbb{B})$ be given. By using  Definition \ref{eq1}, there is an orthonormal set $\{x_{1}, \ldots, x_{k}\}$ in $\mathbb{R}_{+}^{l}$ (i.e., $\|x_i\|=1$ and $x_i ^T \otimes x_j$=0 for $i\neq j$, $i,j=1, \ldots, k$) such that
\[
\lambda_{j}=\displaystyle\oplus_{i=1}^{k}x_{i}^{t}\otimes B_{j}\otimes x_{i}~ j=1, \ldots, m.
\]
 If we define 
 \[
 y_{ir}=  
  \begin{cases}
   x_{i_{s}r}& i=i_{s}   \\ 
    0  & i\neq i_{s}
\end{cases},   1 \leq i \leq n,  1 \leq r \leq k, 1 \leq s \leq l,
 \]
 then  $\{y_1,\dots ,y_k\},$  where $y_{r}=[y_{1r}, \ldots, y_{rr}, \ldots,  y_{nr}]^{t}\in \mathbb{R}_{+}^{n}$, is an orthonormal set in $\mathbb{R}_+^n$ and 
 \[
 x_{r}^{t}\otimes B_{i}\otimes x_{r}=y_{r}^{t}\otimes A_{i}\otimes y_{r}, ~1 \leq r \leq k, 1 \leq i \leq m.
 \]
 Therefore
\[ 
  \lambda_j=\displaystyle\oplus_{i=1}^ky_i^t\otimes A_j\otimes y_i~ \mathrm{for~ all}~ j=1,\ldots ,m 
  \]
 and thus  $(\lambda_1,\dots ,\lambda_m)\in W_{\max}^k(\mathbb{A}),$ which completes the proof.

The assertion (vi) follows from (v).

For (vii), let $(\lambda_1,\dots ,\lambda_m)\in W_{\max}^{k+1}(\mathbb{A})$ be given. By using  Definition \ref{eq1}, there exists $X=[x_1,\dots ,x_{k+1}]\in\mathcal{X}_{n\times (k+1)}$ such that
 \[
\lambda_j=\oplus_{i=1}^{k+1}x_i^t\otimes A_j\otimes x_i~  \mathrm{for~ all} ~j=1,\dots ,m. 
\]
Since $k+1>m$, there is a $1\leq s\leq k+1$ such that 
\[
x_s^t\otimes A_j\otimes x_s\leq \oplus_{i=1,i\neq s}^{k+1}x_i^t\otimes A_j\otimes x_i\ \forall j=1,\dots ,m.
\]
 Now, by setting $X=[x_1,\dots ,x_{s-1},x_{s+1},\dots ,x_{k+1}]$, we have $X\in\mathcal{X}_{n\times k}$ and 
 \[
 \lambda_j=\oplus_{i=1\\ i\neq j}^{k+1}x_i^t\otimes A_j\otimes x_i~ \mathrm{for~ all} ~ j=1,\dots ,m. 
 \]
This implies that  $(\lambda_1,\dots ,\lambda_m)\in W_{\max}^k(\mathbb{A})$ and the proof is complete.
\end{proof}

\section{Joint $C-$numerical range in max algebra}

Let $ A \in M_{n}(\mathbb{R}_{+})$ and $c=[c_{1}, c_{2}, \ldots, c_{n}]^{t} \in \mathbb{R}_{+}^{n}.$ As stated 
in \cite[(4.3)]{Erratum}, the {\em max $c-$numerical range of $A$} as defined in (\ref{c1}) equals
\begin{equation}
W_{\max}^c(A)=\{\displaystyle\oplus_{i=1}^nc_ia_{\sigma(i),\sigma (i)}:\sigma\in \sigma_n\},
\label{tretja}
\end{equation}
and its max convex hull equals
 \[
conv_{\otimes}(W_{\max}^c(A))=[\displaystyle\min_{\sigma\in S_n}\displaystyle\oplus_{i=1}^nc_ia_{\sigma(i),\sigma (i)},  \displaystyle\max_{\sigma\in S_n}\displaystyle\oplus_{i=1}^nc_ia_{\sigma(i),\sigma (i)}]. 
 \]

Next we introduce and study the notion of max joint $c$-numerical range of $m-$tuple of $n\times n$ nonnegative matrices, where $c\in \mathbb{R}_{+}^{n}$.
\begin{definition}\label{def2}
Let $\mathbb{A}=(A_1,\dots ,A_m)$, where $A_i\in M_n(\mathbb{R}_+)$ for all $i=1,\dots ,m$ and let $c=[c_1,\dots ,c_n]^t\in\mathbb{R}_+^n$. The max joint $c$-numerical range of $\mathbb{A}$ is defined and denoted by
\begin{eqnarray*}
W_{\max}^c(\mathbb{A})&=&\{(\oplus_{i=1}^nc_ix_i^t\otimes A_1\otimes x_i,\dots ,\oplus_{i=1}^nc_ix_i^t\otimes A_m\otimes x_i):\\
&& \hspace{2cm} X=[x_1,\dots ,x_n]\in \mathcal{U}_n\}.
\end{eqnarray*}
\end{definition}
It is easy to check that if $c_1=\dots =c_n,$  then 
\begin{equation*}
W_{\max}^c(\mathbb{A})=\displaystyle\{c_{1}(tr_{\otimes}(A_1),\dots ,tr_{\otimes}(A_m))\}.
\end{equation*}

Definition  \ref{tretja} implies the following result, since for each $X=[x_1,\dots ,x_n]\in \mathcal{U}_n$ it holds that $x_i =e_{\sigma (i)}$ for some permutation $\sigma$ of $\{1, \ldots , n\}$.

\begin{proposition}
Let $\mathbb{A}=(A_1,\dots ,A_m)$, where $A_i\in M_n(\mathbb{R}_+)$ for all $i=1,\dots ,m$ and let $c=[c_1,\dots ,c_n]^t\in\mathbb{R}_+^n$.  Then 
\[  
W_{\max}^c(\mathbb{A})=\displaystyle\{(\bigoplus_{i=1}^{n}c_{i}(A_{1})_{\sigma(i)\sigma(i)}, \ldots,  \bigoplus_{i=1}^{n}c_{i}(A_{m})_{\sigma(i)\sigma(i)}: \sigma\in S_{n} \}.
\]
\end{proposition}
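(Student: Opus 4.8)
The plan is to unwind the definition of $W_{\max}^c(\mathbb{A})$ from Definition \ref{def2} and observe that the only freedom in the defining set is the choice of $X=[x_1,\dots,x_n]\in\mathcal{U}_n$. Since $\mathcal{U}_n$ is precisely the group of permutation matrices (as recalled in the introduction via \cite{Erratum}), every such $X$ has columns $x_i=e_{\sigma(i)}$ for a uniquely determined permutation $\sigma\in S_n$, and conversely each $\sigma\in S_n$ yields a valid $X\in\mathcal{U}_n$. Thus taking the union over $X\in\mathcal{U}_n$ is the same as taking the union over $\sigma\in S_n$.

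Next I would substitute $x_i=e_{\sigma(i)}$ into each coordinate and compute the relevant max-quadratic form. For any matrix $A_l=(a^{(l)}_{pq})$ one has
\[
x_i^t\otimes A_l\otimes x_i=e_{\sigma(i)}^t\otimes A_l\otimes e_{\sigma(i)}=(A_l)_{\sigma(i)\sigma(i)},
\]
because the max-product of $A_l$ with a standard basis vector simply selects the $\sigma(i)$-th column, and contracting with $e_{\sigma(i)}^t$ on the left picks out the $(\sigma(i),\sigma(i))$ diagonal entry. Feeding this into the $l$-th coordinate of the defining expression gives
\[
\bigoplus_{i=1}^n c_i\,x_i^t\otimes A_l\otimes x_i=\bigoplus_{i=1}^n c_i\,(A_l)_{\sigma(i)\sigma(i)},
\]
which is exactly the $l$-th component appearing in the claimed formula. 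Doing this simultaneously for $l=1,\dots,m$ identifies each tuple in $W_{\max}^c(\mathbb{A})$ with the tuple indexed by the corresponding $\sigma$.

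Assembling these two observations yields the asserted equality of sets: the left-hand set is the image of $\mathcal{U}_n$ under the map $X\mapsto(\bigoplus_i c_i x_i^t\otimes A_1\otimes x_i,\dots)$, and the bijective correspondence $X\leftrightarrow\sigma$ together with the pointwise computation shows this image coincides with $\{(\bigoplus_{i=1}^n c_i (A_1)_{\sigma(i)\sigma(i)},\dots,\bigoplus_{i=1}^n c_i (A_m)_{\sigma(i)\sigma(i)}):\sigma\in S_n\}$. This is precisely the single-matrix reduction \eqref{tretja} of \cite[(4.3)]{Erratum} applied coordinatewise, which is why the statement asserts the proposition \emph{follows} from \eqref{tretja}.

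This argument is essentially a direct verification, so I do not expect a genuine obstacle. The only point needing care is the identification of $\mathcal{U}_n$ with $S_n$ and the resulting bijectivity of the correspondence $X\leftrightarrow\sigma$ — one must confirm that ranging over all permutation matrices is equivalent to ranging over all $\sigma\in S_n$, so that neither set contains spurious tuples. Once that bijection is invoked, the coordinatewise evaluation of $e_{\sigma(i)}^t\otimes A_l\otimes e_{\sigma(i)}$ is routine and the equality is immediate.
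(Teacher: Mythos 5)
Your proposal is correct and matches the paper's own (very brief) justification: the paper likewise notes that every $X\in\mathcal{U}_n$ has columns $x_i=e_{\sigma(i)}$ for some $\sigma\in S_n$ and that the claim then follows from the single-matrix formula for $W_{\max}^c(A)$. You have simply spelled out the routine evaluation $e_{\sigma(i)}^t\otimes A_l\otimes e_{\sigma(i)}=(A_l)_{\sigma(i)\sigma(i)}$ in more detail, which the paper leaves implicit.
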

In the following remark we state the relation between $W_{\max}^c(\mathbb{A})$ and $W_{\max}^c(A_i)$'s,    $1\leq i \leq m$.
\begin{remark}
Let $\mathbb{A}=(A_1,\dots ,A_m),$ where $A_i\in M_n(\mathbb{R}_+), i=1, 2, \ldots, m$ be $m$ nonnegative matrices and let $c=[c_1,\dots ,c_n]^t\in\mathbb{R}_{+}^{n}$. Then
\[
W_{\max}^c(\mathbb{A})\subseteq W_{\max}^c(A_1) \times \cdots \times W_{\max}^c(A_m)
\]
and the equality holds if $W_{\max}^c(A_i)$'s,  $1\leq i \leq m$ are singletons.
\end{remark}
The following remark shows  some  cases in  which $ W_{max}^{c}(\mathbb{A})$ is a singleton set.
\begin{remark}
Let $\mathbb{A}=(A_1,\dots ,A_m),$ where $A_i\in M_n(\mathbb{R}_+), i=1, 2, \ldots, m$,   and let $c=[c_1,\dots ,c_n]^t\in\mathbb{R}_{+}^{n}$.  Then
\begin{itemize}
\item[(i)]
If $\displaystyle(A_{i})_{jj}=\alpha_{i},  1\leq i \leq m,  1\leq j \leq n,$ then
\[
W_{max}^{c}(A_{i})=\{\alpha_{i}\oplus_{j=1}^{n}c_{j} \},  W_{max}^{c}(\mathbb{A})=\{(\alpha_{1}\oplus_{j=1}^{n}c_{j},  \ldots, \alpha_{m}\oplus_{j=1}^{n}c_{j}) \}, 1\leq i \leq m.
\]  
\item[(ii)] If $c_1=\dots =c_n,$  then
\[
W_{\max}^c(A_i)=\{c_{1}\oplus_{j=1}^n(A_i)_{jj}\},W_{\max}^c(\mathbb{A})=\{(c_{1}\oplus_{j=1}^n(A_1)_{jj},\dots ,c_{1}\oplus_{j=1}^n(A_m)_{jj}\}.
\]
\end{itemize}
\end{remark}
\begin{remark}
Let $\mathbb{A}=(A_1,\dots ,A_m),$ where $A_i\in M_n(\mathbb{R}_+), i=1, 2, \ldots, m$   and let $c=[c_1,\dots ,c_n]^t\in\mathbb{R}_{+}^{n}$. Moreover suppose that 
$W_{c}^{max}(A_{i}),  1\leq i \leq m$,  has $r_i$ elements, where  $1\leq r_i\leq n!, i=1, \ldots, m$ and $r=\displaystyle\max_{1\leq i\leq m}r_i=r$. Then the number of elements of $W_{\max}^c(\mathbb{A})$ is between $r$ and $n!$.

In particular, if $\mathbb{A}=(A_1,A_2),$ where $A_{1}, A_{2}$ are  two $2\times 2$ nonnegative  matrices and let $c=[c_{1}, c_{2}]^{t}$. Then each of the sets  $W_{max}^{c}(A_{1}),$   $W_{max}^{c}(A_{2})$ and  $W_{max}^{c}(\mathbb{A})$  has at most two elements.
\end{remark}

\begin{example}
Let $\mathbb{A}=(A_1,A_2)$, where $A_1=
\begin{bmatrix}
5 & 2\\
7 & 4
\end{bmatrix},A_2=\begin{bmatrix}
3 &4\\
2 & 8
\end{bmatrix}$ and $c=[2,8]^t$. So  $W_{\max}^c(A_1)=\{32,40\},W_{\max}^c(A_2)=\{24,64\}$ and
 \[
W_{\max}^c(\mathbb{A})=\{(32,64),(40,24)\}.
\]
\end{example}

%
%
Let $\mathbb{A}=(A, B),$  where $A, B \in M_{2}(\mathbb{R}_{+})$ are two nonnegative matrices and let $W_{max}^{c}(A)=\{a_{1}, a_{2}\}, W_{max}^{c}(B)=\{b_{1}, b_{2}\}$   with $a_{2}>a_{1}$ and $b_{2}>b_{1}$. Does the following equality hold:
$W_{max}^{c}(\mathbb{A})=\{(a_{1}, b_{2}), (a_{2}, b_{1}) \}$?
The next example shows that the answer to this  question is negative in general
\begin{example}
Let $\mathbb{A}=(A_1,A_2)$, where $A_1=
\begin{bmatrix}
5 & 2\\
7 & 4
\end{bmatrix}, A_2=\begin{bmatrix}
9 &4\\
8 & 7
\end{bmatrix}$ and $c=[4, 3]^t$. So  $W_{\max}^c(A_1)=\{20, 16\}, W_{\max}^c(A_2)=\{36, 28\}$ and
 \[
W_{\max}^c(\mathbb{A})=\{(20, 36), (16, 28)\}.
\]
\end{example}


\begin{example}
Let $\mathbb{A}=(A_1,A_2,A_3)$, where $A_1=\begin{bmatrix}
2 &3 &4\\
5 &7 & 1\\
2 & 6 & 8
\end{bmatrix},A_2=\begin{bmatrix} 
3 &4 &5\\
0 & 5 & 2\\
1 & 7 &2
\end{bmatrix},\\A_3=\begin{bmatrix}
4 &3 & 1\\
5 & 6 & 2\\
3 &4 & 6
\end{bmatrix}
$ and $c=[3,4,5]^t$. Then 
\[
W_{\max}^c(A_1)=\{40,35,28,32\},W_{\max}^c(A_2)=\{20,25,15\},W_{\max}^c(A_3)=\{30,24\}
\]
and 
\[
W_{\max}^c(\mathbb{A})=\{(40,20,30),(35,25,30),(40,15,30),(28,20,24),(32,15,24)\}.
\]
\end{example}

By analogy with  \cite[Definition 7]{TZPA} for a single $n\times n$ nonnegative matrix we introduce the following definition for an $m-\rm{tuple}$ of $n \times n$ nonnegative matrices.
\begin{definition}\label{def3}
Let $\mathbb{A}=(A_1,\dots ,A_m)$, where $A_i\in M_n(\mathbb{R}_+)$ for all $i=1,\dots ,m$ and let $C\in M_n(\mathbb{R}_+)$. The max joint $C$-numerical range of $\mathbb{A}$ is defined and denoted by
\begin{equation*}
W_{\max}^C(\mathbb{A})=\displaystyle\{(tr_{\otimes}(C\otimes X^t\otimes A_1\otimes X),\dots , tr_{\otimes}(C\otimes X^t\otimes A_m\otimes X)):\ X\in\mathcal{U}_n\}.
\end{equation*}
\end{definition}

\begin{remark}\label{WcWC}
Let  $\mathbb{A}=(A_1,\dots ,A_m),$ where $A_i\in M_n(\mathbb{R}_+)$ for all $i=1,\dots ,m$ and let  also $c=[c_1,\dots ,c_n]^t\in\mathbb{R}_+^n$. If $C=diag(c_1,\dots ,c_n),$  then one can easily check  that
\begin{equation*}
W_{\max}^c(\mathbb{A})=W_{\max}^C(\mathbb{A}).
\end{equation*}
Thus  this concept is a generalization of the max joint $c$-numerical range.
\end{remark}

\begin{remark}\label{trace}
It is well known that  for $A,B\in M_{n\times n}(\mathbb{R}_+)$ we have $tr_{\otimes}(A^t)=tr_{\otimes}(A)$ and $tr_{\otimes}(A\otimes B)=tr_{\otimes}(B\otimes A)$.
%
\end{remark}

In the following theorem, we state some basic properties of the max joint $C$-numerical range of a tuple of matrices. 

\begin{theorem}\label{th3}
Let $\mathbb{A}=(A_1,\dots ,A_m)$, where $A_i\in M_n(\mathbb{R}_+)$ for all $i=1,\dots ,m$, and $C\in M_n(\mathbb{R}_+)$. Then the following assertions hold:
\begin{itemize}
\item[(i)] $W_{\max}^C(A_1\oplus\alpha_1I,\dots ,A_m\oplus\alpha_mI)=W_{\max}^C(A_1,\dots ,A_m)\oplus \{(\alpha_1,\dots ,\alpha_m)tr_{\otimes}(C)\}$, where $\alpha_1,\dots ,\alpha_m\in\mathbb{R}_+$;
\item[(ii)] $W_{\max}^C(\mathbb{A}\oplus \mathbb{B})\subseteq W_{\max}^C(\mathbb{A})\oplus W_{\max}^C(\mathbb{B})$, and $W_{\max}^{C\oplus D}(\mathbb{A})\subseteq W_{\max}^C(\mathbb{A})\oplus W_{\max}^D(\mathbb{A})$, where $\mathbb{B}=(B_1,\dots ,B_m)$ such that $B_i\in M_n(\mathbb{R}_+)$ for all $i=1,\dots ,m$ and $D\in M_n(\mathbb{R}_+)$;
\item[(iii)] $W_{\max}^C(U^t\otimes A_1\otimes U,\dots ,U^t\otimes A_m\otimes U)=W_{\max}^C(A_1,\dots ,A_m)$, where $U\in\mathcal{U}_n$;
\item[(iv)]  $W_{\max}^C(A_1^t,\dots ,A_m^t)=W_{\max}^{C^t}(A_1,\dots ,A_m)$;
\item[(v)] If $C=\alpha I_n$ for $\alpha\in\mathbb{R}_+$, then $W_{\max}^C(\mathbb{A})=\{\alpha (tr_{\otimes}(A_1),\dots ,tr_{\otimes}(A_m))\}$;
\item[(vi)] $W_{\max}^{(\alpha C\oplus\beta I)}(\mathbb{A})=\alpha W_{\max}^C(\mathbb{A})\oplus\{\beta (tr_{\otimes}(A_1),\dots ,tr_{\otimes}(A_m))\}$, where $\alpha ,\beta\in\mathbb{R}_+$;
\item[(vii)] $W_{\max}^{V^t\otimes C\otimes V}(\mathbb{A})=W_{\max}^C(\mathbb{A})$, where $V\in\mathcal{U}_n$.
\end{itemize}
\end{theorem}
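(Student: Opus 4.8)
The plan is to verify each of the seven items by examining a single coordinate of a generic point of the range and reducing everything to three elementary facts: the max product $\otimes$ distributes over the max sum $\oplus$ on both sides, the max trace satisfies $tr_{\otimes}(P\oplus Q)=tr_{\otimes}(P)\oplus tr_{\otimes}(Q)$ and $tr_{\otimes}(\alpha P)=\alpha\,tr_{\otimes}(P)$, and the cyclic and transpose invariances $tr_{\otimes}(P\otimes Q)=tr_{\otimes}(Q\otimes P)$, $tr_{\otimes}(P^t)=tr_{\otimes}(P)$ recorded in Remark \ref{trace}. Throughout I would also use that $\mathcal{U}_n$ is the permutation group, so that every $U\in\mathcal{U}_n$ satisfies $U^t\otimes U=U\otimes U^t=I_n$ and products of elements of $\mathcal{U}_n$ stay in $\mathcal{U}_n$; in particular $tr_{\otimes}(X^t\otimes A_i\otimes X)=tr_{\otimes}(A_i)$ for every $X\in\mathcal{U}_n$ by one cyclic shift.

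For the additive and homogeneity items (i), (ii), (v), (vi) I would fix $X\in\mathcal{U}_n$ and push the relevant decomposition through the $i$-th coordinate. For (i), expand $X^t\otimes(A_i\oplus\alpha_iI)\otimes X=(X^t\otimes A_i\otimes X)\oplus\alpha_iI$ using $X^t\otimes X=I_n$, then multiply on the left by $C$ and take $tr_{\otimes}$ to obtain $tr_{\otimes}(C\otimes X^t\otimes A_i\otimes X)\oplus\alpha_i\,tr_{\otimes}(C)$; since the same $X$ parametrizes both sides this yields equality. The identical distributive computation gives (ii), except that the two summands are now tied to one common $X$, so only the inclusion $\subseteq$ survives; the second inclusion of (ii) follows the same way after distributing $(C\oplus D)$. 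Item (v) is the special case $C=\alpha I_n$, where $tr_{\otimes}(\alpha I_n\otimes X^t\otimes A_i\otimes X)=\alpha\,tr_{\otimes}(A_i)$ independently of $X$, producing a singleton; item (vi) combines (v) with the scaling identity after splitting $\alpha C\oplus\beta I$ inside the trace.

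The conjugation items (iii), (iv), (vii) I would handle by a change of variable inside $tr_{\otimes}$. For (iii) set $Y=U\otimes X$, so that $(U\otimes X)^t\otimes A_i\otimes(U\otimes X)=X^t\otimes U^t\otimes A_i\otimes U\otimes X$; since $X\mapsto U\otimes X$ is a bijection of $\mathcal{U}_n$ the two ranges coincide. For (vii) the correct substitution is $Y=X\otimes V^t$, whence $Y^t=V\otimes X^t$, and one cyclic shift turns $tr_{\otimes}(V^t\otimes C\otimes V\otimes X^t\otimes A_i\otimes X)$ into $tr_{\otimes}(C\otimes(V\otimes X^t)\otimes A_i\otimes(X\otimes V^t))=tr_{\otimes}(C\otimes Y^t\otimes A_i\otimes Y)$, with $X\mapsto X\otimes V^t$ again a bijection of $\mathcal{U}_n$. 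For (iv) I would transpose the argument of the trace: $tr_{\otimes}(C\otimes X^t\otimes A_i^t\otimes X)=tr_{\otimes}\big((C\otimes X^t\otimes A_i^t\otimes X)^t\big)=tr_{\otimes}(X^t\otimes A_i\otimes X\otimes C^t)$, and a final cyclic shift moves $C^t$ to the front to exhibit the $C^t$-range.

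The computations are all routine once these identities are in place; the only genuine care is the bookkeeping in (iv) and (vii), where one must order the transpose-reversal and the cyclic shift so that the substituted matrix $Y$ both lies in $\mathcal{U}_n$ and ranges over all of $\mathcal{U}_n$. I expect this order-of-operations tracking to be the main, albeit minor, obstacle; everything else collapses to distributivity of $\otimes$ over $\oplus$ together with the homogeneity and cyclic invariance of $tr_{\otimes}$.
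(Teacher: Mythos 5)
Your proposal is correct and follows essentially the same route as the paper's proof: coordinate-wise distribution of $\otimes$ over $\oplus$ under $tr_{\otimes}$ for (i), (ii), (v), (vi), the substitution $Y=U\otimes X$ for (iii), the transpose-then-cyclic-shift computation $tr_{\otimes}(C\otimes X^t\otimes A_i^t\otimes X)=tr_{\otimes}(X^t\otimes A_i\otimes X\otimes C^t)=tr_{\otimes}(C^t\otimes X^t\otimes A_i\otimes X)$ for (iv), and $Y=X\otimes V^t$ for (vii). The only (cosmetic) divergence is in (vii), where you invoke bijectivity of $X\mapsto X\otimes V^t$ on $\mathcal{U}_n$ to get equality at once, while the paper proves the inclusion $W_{\max}^{V^t\otimes C\otimes V}(\mathbb{A})\subseteq W_{\max}^C(\mathbb{A})$ and then recovers the reverse inclusion by applying that inclusion with $V^t$ in place of $V$.
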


\begin{proof}
To prove (i), let $\alpha_1,\dots ,\alpha_m\in\mathbb{R}_+$ be given. Then by Definition \ref{def3}, we have
\begin{eqnarray*}
&&W_{\max}^C(A_1\oplus\alpha_1I,\dots ,A_m\oplus\alpha_mI)\\
&=&\{(tr_{\otimes}(C\otimes X^t\otimes (A_1\oplus\alpha_1I)\otimes X),\dots ,tr_{\otimes}(C\otimes X^t\otimes (A_m\oplus\alpha_mI)\otimes X)):X\in\mathcal{U}_n\}\\
&=&\{(tr_{\otimes}(C\otimes X^t\otimes A_1\otimes X)\oplus \alpha_1tr_{\otimes}(C),\dots ,tr_{\otimes}(C\otimes X^t\otimes A_m\otimes X)\oplus \alpha_mtr_{\otimes}(C)): X\in\mathcal{U}_n\}\\
&=&\{(tr_{\otimes}(C\otimes X^t\otimes A_1\otimes X),\dots ,tr_{\otimes}(C\otimes X^t\otimes A_m\otimes X))\oplus(\alpha_1,\dots ,\alpha_m)tr_{\otimes}(C):X\in\mathcal{U}_n\}\\
&=&W_{\max}^C(A_1,\dots ,A_m)\oplus \{(\alpha_1,\dots ,\alpha_m)tr_{\otimes}(C)\}.
\end{eqnarray*}

For (ii), let $\mathbb{B}=(B_1,\dots ,B_m)$, where $B_i\in M_n(\mathbb{R}_+)$ for  $i=1,\dots , m$. Then
\begin{eqnarray*}
&&W_{\max}^C(\mathbb{A}\oplus \mathbb{B})\\
&=&\{(tr_{\otimes}(C\otimes X^t\otimes (A_1\oplus B_1)\otimes X),\dots ,tr_{\otimes}(C\otimes X^t\otimes (A_m\oplus B_m)\otimes X)):X\in\mathcal{U}_n\}\\
&=&\{(tr_{\otimes}(C\otimes X^t\otimes A_1\otimes X)\oplus tr_{\otimes} (C\otimes X^t\otimes B_1\otimes X),\dots ,\\
&&tr_{\otimes}(C\otimes X^t\otimes A_m\otimes X)\oplus tr_{\otimes}(C\otimes X^t\otimes B_m\otimes X)):X\in\mathcal{U}_n\}\\
&\subseteq&\{(tr_{\otimes}(C\otimes X^t\otimes A_1\otimes X),\dots ,tr_{\otimes}(C\otimes X^t\otimes A_m\otimes X)):X\in\mathcal{U}_n\}\\
&\oplus&\{(tr_{\otimes}(C\otimes X^t\otimes B_1\otimes X),\dots ,tr_{\otimes}(C\otimes X^t\otimes B_m\otimes X)):X\in\mathcal{U}_n\}\\
&=&W_{\max}^C(\mathbb{A})\oplus W_{\max}^C(\mathbb{B}).
\end{eqnarray*}
 
 For the second assertion in (ii), let $D\in M_n(\mathbb{R}_+)$ be given. Then
 \begin{eqnarray*}
 &&W_{\max}^{C\oplus D}(\mathbb{A})\\
 &=&\{(tr_{\otimes}((C\oplus D)\otimes X^t\otimes A_1\otimes X),\dots ,tr_{\otimes}((C\oplus D)\otimes X^t\otimes A_m\otimes X)):X\in\mathcal{U}_n\}\\
 &=&\{(tr_{\otimes}(C\otimes X^t\otimes A_1\otimes X)\oplus tr_{\otimes}(D\otimes X^t\otimes A_1\otimes X),\dots ,\\
 &&tr_{\otimes}(C\otimes X^t\otimes A_m\otimes X)\oplus tr_{\otimes}(D\otimes X^t\otimes A_m\otimes X)):X\in\mathcal{U}_n\}\\
 &=&\{(tr_{\otimes}(C\otimes X^t\otimes A_1\otimes X),\dots ,tr_{\otimes}(C\otimes X^t\otimes A_m\otimes X))\\
 &\oplus& (tr_{\otimes}(D\otimes X^t\otimes A_1\otimes X),\dots ,tr_{\otimes}(D\otimes X^t\otimes A_m\otimes X)):X\in\mathcal{U}_n\}\\
 &\subseteq&\{(tr_{\otimes}(C\otimes X^t\otimes A_1\otimes X),\dots ,tr_{\otimes}(C\otimes X^t\otimes A_m\otimes X)):X\in\mathcal{U}_n\}\\
 &\oplus&\{ (tr_{\otimes}(D\otimes X^t\otimes A_1\otimes X),\dots ,tr_{\otimes}(D\otimes X^t\otimes A_m\otimes X)):X\in\mathcal{U}_n\}\\
 &=&W_{\max}^{C}(\mathbb{A})\oplus W_{\max}^{ D}(\mathbb{A}).
\end{eqnarray*} 
 
 For (iii), let $U\in\mathcal{U}_n$ be given. If $X\in\mathcal{U}_n$, by setting $Y=U\otimes X$, we have
  \begin{eqnarray*}
&& W_{\max}^C(U^t\otimes A_1\otimes U,\dots ,U^t\otimes A_m\otimes U)\\
 &=&\{(tr_{\otimes}(C\otimes X^t\otimes U^t\otimes A_1\otimes U\otimes X),\dots ,tr_{\otimes}(C\otimes X^t\otimes U^t\otimes A_m\otimes U\otimes X)):X\in\mathcal{U}_n\}\\
 &=&\{(tr_{\otimes}(C\otimes Y^t\otimes A_1\otimes Y),\dots ,tr_{\otimes}(C\otimes Y^t\otimes A_m\otimes Y)):Y\in\mathcal{U}_n\}\\
 &=&W_{\max}^C(A_1,\dots ,A_m).
 \end{eqnarray*} 
 
 (iv) By Definition \ref{def3} and Remark \ref{trace} it follows
 \begin{eqnarray*}
 &&W_{\max}^C(A_1^t,\dots ,A_m^t)\\
 &=&\{(tr_{\otimes}(C\otimes X^t\otimes A_1^t\otimes X),\dots ,tr_{\otimes}(C\otimes X^t\otimes A_m^t\otimes X)):X\in\mathcal{U}_n\}\\
 &=&\{(tr_{\otimes}(X^t\otimes A_1\otimes X\otimes C^t),\dots ,tr_{\otimes}(X^t\otimes  A_m\otimes X\otimes C^t)):X\in\mathcal{U}_n\}\\
 &=&\{(tr_{\otimes}(C^t\otimes X^t\otimes A_1\otimes X),\dots ,tr_{\otimes}(C^t\otimes X^t\otimes A_m\otimes X)):X\in\mathcal{U}_n\}\\
 &=&W_{\max}^{C^t}(A_1,\dots ,A_m).
 \end{eqnarray*} 
 
 To prove (v), let $\alpha\in\mathbb{R}_+)$ be given. Then
 \begin{eqnarray*}
 &&W_{\max}^{\alpha I_n}(A_1,\dots ,\alpha_m)\\
 &=&\{(tr_{\otimes}(\alpha I_n\otimes X^t\otimes A_1\otimes X),\dots ,tr_{\otimes}(\alpha I_n\otimes X^t\otimes A_m\otimes X)):X\in\mathcal{U}_n\}\\
 &=&\{(tr_{\otimes}(\alpha A_1),\dots ,tr_{\otimes}(\alpha A_m)):X\in\mathcal{U}_n\}\\
 &=&\{\alpha (tr_{\otimes}(A_1),\dots ,tr_{\otimes}(A_m))\}.
  \end{eqnarray*}

To prove (vi), let $(\lambda_1,\dots ,\lambda_m)\in W_{\max}^{(\alpha C\oplus\beta I)}(\mathbb{A})$ be given. By Definition \ref{def3}, there exists $X\in\mathcal{U}_n$ such that $\lambda_i=tr_{\otimes}((\alpha C\oplus\beta I)\otimes X^t\otimes A_i\otimes X)$ for all $i=1,\dots ,m$. So, we have
\begin{equation*}
\lambda_i=\alpha tr_{\otimes}(C\otimes X^t\otimes A_i\otimes X)\oplus\beta tr_{\otimes}(A_i)\ \ \forall i=1,\dots ,m.
\end{equation*}
Therefore, $(\lambda_1,\dots ,\lambda_m)\in\alpha W_{\max}^C(\mathbb{A})\oplus\{\beta (tr_{\otimes}(A_1),\dots ,tr(A_m))\}$  and so the inclusion   $W_{\max}^{(\alpha C\oplus\beta I)}(\mathbb{A})\subseteq\alpha W_{\max}^C(\mathbb{A})\oplus \{\beta (tr_{\otimes}(A_1),\dots ,tr_{\otimes}(A_m))\}$ follows. The reverse inclusion can  be verified in a similar manner.

For (vii), let $(\lambda_1,\dots ,\lambda_m)\in W_{\max}^{V^t\otimes C\otimes V}(\mathbb{A})$ be given. Then by Definition \ref{def3}, there exists $X\in\mathcal{U}_n$ such that $\lambda_i=tr_{\otimes}(V^t\otimes C\otimes V\otimes X^t\otimes A_i\otimes X)$ for all $i=1,\dots ,m$. Now, setting $Y=X\otimes V^t\in\mathcal{U}_n$, for all $i=1,\dots ,m$ we have 
\begin{eqnarray*}
\lambda_i &=&tr_{\otimes}(V^t\otimes C\otimes V\otimes X^t\otimes A_i\otimes X)\\
&=&tr_{\otimes}(C\otimes (X\otimes V^t)^t\otimes A_i\otimes (X\otimes V^t))\\
&=&tr_{\otimes}(C\otimes Y^t\otimes A_i\otimes Y).
\end{eqnarray*}
Thus $(\lambda_1,\dots ,\lambda_m)\in W_{\max}^C(\mathbb{A})$. Therefore, $W_{\max}^{V^t\otimes C\otimes V}(\mathbb{A})\subseteq W_{\max}^C(\mathbb{A})$. 
Since by the already established inclusion (applied to $V^t$) we have
$$W_{\max}^C(\mathbb{A})= W_{\max}^{V\otimes V^t\otimes C \otimes V\otimes V^t}(\mathbb{A})\subseteq W_{\max}^{V^t\otimes C \otimes V}(\mathbb{A}),$$
the proof is complete.
\end{proof}
By Theorem \ref{th3} and Remark \ref{WcWC} we have the following consequence.
\begin{corollary}
Let $\mathbb{A}=(A_1,\dots ,A_m)$, where $A_i\in M_n(\mathbb{R}_+)$ for all $i=1,\dots ,m$, and $c=[c_1,\dots ,c_n]^t\in\mathbb{R}_+^n$. Then the following assertions hold:
\begin{itemize}
\item[(i)] $W_{\max}^c(A_1\oplus\alpha_1I,\dots ,A_m\oplus\alpha_mI)=W_{\max}^c(A_1,\dots ,A_m)\oplus\{ (\alpha_1,\dots ,\alpha_m)(\oplus_{i=1}^nc_i)\}$, where $\alpha_1,\dots ,\alpha_m\in\mathbb{R}_+$;
\item[(ii)] $W_{\max}^c(\mathbb{A}\oplus \mathbb{B})\subseteq W_{\max}^c(\mathbb{A})\oplus W_{\max}^c(\mathbb{B})$ and $W_{\max}^{c\oplus d}(\mathbb{A})\subseteq W_{\max}^c(\mathbb{A})\oplus W_{\max}^d(\mathbb{A})$, where $\mathbb{B}=(B_1,\dots ,B_m)$ such that $B_i\in M_n(\mathbb{R}_+)$ for all $i=1,\dots ,m$ and $d=[d_1,\dots ,d_n]^t\in\mathbb{R}_+^n$;
\item[(iii)] $W_{\max}^c(U^t\otimes A_1\otimes U,\dots ,U^t\otimes A_m\otimes U)=W_{\max}^c(A_1,\dots ,A_m)$, where $U\in\mathcal{U}_n$;
\item[(iv)] $W_{\max}^c(A_1^t,\dots ,A_m^t)=W_{\max}^c(A_1,\dots ,A_m)$;
\item[(v)] If $c_1=\dots =c_n$, then  $W_{\max}^c(\mathbb{A})=\{c_1 (tr_{\otimes}(A_1),\dots ,tr_{\otimes}(A_m))\}$.
\end{itemize}
\end{corollary}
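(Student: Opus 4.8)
The plan is to reduce every item of the corollary to the corresponding item of Theorem \ref{th3} by specializing to the diagonal matrix $C=diag(c_1,\dots ,c_n)$ and invoking Remark \ref{WcWC}, which gives $W_{\max}^c(\mathbb{A})=W_{\max}^C(\mathbb{A})$ for this choice of $C$. Since Remark \ref{WcWC} applies verbatim to any tuple and to both sides of each identity (replacing each matrix $A_i$ or each $C$-type argument as needed), the whole corollary becomes a dictionary translation, provided I first record a handful of elementary identities for diagonal matrices in the max algebra.

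First I would establish the four facts that drive the translation. For $C=diag(c_1,\dots ,c_n)$ one has $tr_{\otimes}(C)=\max_{1\leq i\leq n}c_i=\oplus_{i=1}^n c_i$ directly from the definition of the max trace; the matrix is symmetric, so $C^t=C$; if $D=diag(d_1,\dots ,d_n)$ then $C\oplus D=diag(c_1\oplus d_1,\dots ,c_n\oplus d_n)=diag(c\oplus d)$, because $\oplus$ on matrices is the entrywise maximum and both matrices are diagonal; and finally $c_1=\dots =c_n$ is equivalent to $C=c_1 I_n$. These are immediate and form the only computational content of the argument.

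With these in hand I would read off each statement. Part (i) follows from Theorem \ref{th3}(i) upon substituting $tr_{\otimes}(C)=\oplus_{i=1}^n c_i$. The first inclusion in (ii) is exactly Theorem \ref{th3}(ii) rewritten through Remark \ref{WcWC}; the second inclusion uses the identity $C\oplus D=diag(c\oplus d)$, so that $W_{\max}^{c\oplus d}(\mathbb{A})=W_{\max}^{C\oplus D}(\mathbb{A})\subseteq W_{\max}^{C}(\mathbb{A})\oplus W_{\max}^{D}(\mathbb{A})=W_{\max}^c(\mathbb{A})\oplus W_{\max}^d(\mathbb{A})$. Part (iii) is an immediate transcription of Theorem \ref{th3}(iii). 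Part (iv) comes from Theorem \ref{th3}(iv) combined with $C^t=C$, which collapses $W_{\max}^{C^t}$ back to $W_{\max}^{C}=W_{\max}^c$. Part (v) is Theorem \ref{th3}(v) applied with $C=c_1 I_n$, i.e. $\alpha=c_1$.

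There is no genuine obstacle here: the corollary is a specialization, and the only place requiring a moment's care is verifying that $C\oplus D=diag(c\oplus d)$ so that the $c\oplus d$ superscript on the left of (ii) is consistent with the matrix-level $C\oplus D$ appearing in Theorem \ref{th3}(ii). I would state these diagonal-matrix identities once at the outset and then invoke each part of Theorem \ref{th3} in turn, which keeps the proof short and essentially bookkeeping.
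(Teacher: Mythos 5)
Your proposal is correct and matches the paper exactly: the paper derives this corollary precisely by combining Theorem \ref{th3} with Remark \ref{WcWC} for $C=diag(c_1,\dots ,c_n)$, using the same diagonal-matrix identities ($tr_{\otimes}(C)=\oplus_{i=1}^n c_i$, $C^t=C$, $C\oplus D=diag(c\oplus d)$, and $C=c_1I_n$ when all $c_i$ coincide). Your explicit recording of these bookkeeping facts is, if anything, slightly more careful than the paper, which states the deduction without spelling them out.
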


\section*{Acknowledgments}

We thank S. Gaubert for pointing out and correcting some mistakes in the early version of this article. We also thank R. Tayebi Khorami for his consulting activities.

The third author acknowledges a partial support of the Sloveniah Research Agency (grants P1-0222, J1-8133 and J2-2512). Fallat's research is supported in part by an NSERC Discovery Grant, RGPIN--2019--03934.

\bibliographystyle{amsplain}

\end{document}